 \font \eightrm=cmr8
 \newcommand{\nc}{\newcommand}
\newtheorem{thm}{Theorem}
\newtheorem{cor}[thm]{Corollary}
\newtheorem{lem}[thm]{Lemma}
\newtheorem{prop}[thm]{Proposition}
\newtheorem{rmk}[thm]{Remark}
\newcommand{\tree}{\scalebox{0.4}{{\parbox{0.5pc}{
  \begin{picture}(30,45) (75,-60)
    \SetWidth{1.5}
    \SetColor{Black}
    \Line(90,-30)(75,-60)
    \Line(90,-30)(105,-60)
    \Line(90,-15)(90,-30)
  \end{picture}}}}}
\newcommand{\treeC}{\scalebox{0.2}{{\parbox{0.5pc}{
 \begin{picture}(90,105) (75,-30)
    \SetWidth{2.4}
    \SetColor{Black}
    \Line(90,0)(75,-30)
    \Line(120,60)(90,0)
    \Line(90,0)(105,-30)
    \Line(105,30)(135,-30)
    \Line(120,60)(165,-30)
    \Line(120,75)(120,60)
  \end{picture}
}}}}
\newcommand{\treeD}{\scalebox{0.2}{{\parbox{0.5pc}{
 \begin{picture}(90,105) (75,-30)
    \SetWidth{2.4}
    \SetColor{Black}
    \Line(90,0)(75,-30)
    \Line(120,60)(90,0)
    \Line(90,0)(105,-30)
    \Line(120,60)(165,-30)
    \Line(120,75)(120,60)
    \Line(150,0)(135,-30)
  \end{picture}
}}}}
\newcommand{\treeF}{\scalebox{0.2}{{\parbox{0.5pc}{
 \begin{picture}(90,105) (75,-30)
    \SetWidth{2.4}
    \SetColor{Black}
    \Line(90,0)(75,-30)
    \Line(120,60)(90,0)
    \Line(120,60)(165,-30)
    \Line(120,75)(120,60)
    \Line(135,30)(105,-30)
    \Line(120,0)(135,-30)
  \end{picture}
}}}}
\newcommand{\treeG}{\scalebox{0.2}{{\parbox{0.5pc}{
 \begin{picture}(90,105) (75,-30)
    \SetWidth{2.4}
    \SetColor{Black}
    \Line(90,0)(75,-30)
    \Line(120,60)(90,0)
    \Line(120,60)(165,-30)
    \Line(120,75)(120,60)
    \Line(105,30)(135,-30)
    \Line(120,0)(105,-30)
  \end{picture}
}}}}
\def\ta1{{\scalebox{0.2}{ 
\begin{picture}(12,12)(38,-38)
\SetWidth{0.5} \SetColor{Black} \Vertex(45,-33){5.66}
\end{picture}}}}
\def\tb2{{\scalebox{0.2}{ 
\begin{picture}(12,42)(38,-38)
\SetWidth{0.5} \SetColor{Black} \Vertex(45,-3){5.66}
\SetWidth{1.0} \Line(45,-3)(45,-33) \SetWidth{0.5}
\Vertex(45,-33){5.66}
\end{picture}}}}
\def\tc3{{\scalebox{0.2}{ 
\begin{picture}(12,72)(38,-38)
\SetWidth{0.5} \SetColor{Black} \Vertex(45,27){5.66}
\SetWidth{1.0} \Line(45,27)(45,-3) \SetWidth{0.5}
\Vertex(45,-33){5.66} \SetWidth{1.0} \Line(45,-3)(45,-33)
\SetWidth{0.5} \Vertex(45,-3){5.66}
\end{picture}}}}
\def\td31{{\scalebox{0.2}{ 
\begin{picture}(42,42)(23,-38)
\SetWidth{0.5} \SetColor{Black} \Vertex(45,-3){5.66}
\Vertex(30,-33){5.66} \Vertex(60,-33){5.66} \SetWidth{1.0}
\Line(45,-3)(30,-33) \Line(60,-33)(45,-3)
\end{picture}}}}
\def\te4{{\scalebox{0.2}{ 
\begin{picture}(12,102)(38,-8)
\SetWidth{0.5} \SetColor{Black} \Vertex(45,57){5.66}
\Vertex(45,-3){5.66} \Vertex(45,27){5.66} \Vertex(45,87){5.66}
\SetWidth{1.0} \Line(45,57)(45,27) \Line(45,-3)(45,27)
\Line(45,57)(45,87)
\end{picture}}}}
\def\tf41{{\scalebox{0.2}{ 
\begin{picture}(42,72)(38,-8)
\SetWidth{0.5} \SetColor{Black} \Vertex(45,27){5.66}
\Vertex(45,-3){5.66} \SetWidth{1.0} \Line(45,27)(45,-3)
\SetWidth{0.5} \Vertex(60,57){5.66} \SetWidth{1.0}
\Line(45,27)(60,57) \SetWidth{0.5} \Vertex(75,27){5.66}
\SetWidth{1.0} \Line(75,27)(60,57)
\end{picture}}}}
\def\tg42{{\scalebox{0.2}{ 
\begin{picture}(42,72)(8,-8)
\SetWidth{0.5} \SetColor{Black} \Vertex(45,27){5.66}
\Vertex(45,-3){5.66} \SetWidth{1.0} \Line(45,27)(45,-3)
\SetWidth{0.5} \Vertex(15,27){5.66} \Vertex(30,57){5.66}
\SetWidth{1.0} \Line(15,27)(30,57) \Line(45,27)(30,57)
\end{picture}}}}
\def\th43{{\scalebox{0.2}{ 
\begin{picture}(42,42)(8,-8)
\SetWidth{0.5} \SetColor{Black} \Vertex(45,-3){5.66}
\Vertex(15,-3){5.66} \Vertex(30,27){5.66} \SetWidth{1.0}
\Line(15,-3)(30,27) \Line(45,-3)(30,27) \Line(30,27)(30,-3)
\SetWidth{0.5} \Vertex(30,-3){5.66}
\end{picture}}}}
\def\thj44{{\scalebox{0.2}{ 
\begin{picture}(42,72)(8,-8)
\SetWidth{0.5} \SetColor{Black} \Vertex(30,57){5.66}
\SetWidth{1.0} \Line(30,57)(30,27) \SetWidth{0.5}
\Vertex(30,27){5.66} \SetWidth{1.0} \Line(45,-3)(30,27)
\SetWidth{0.5} \Vertex(45,-3){5.66} \Vertex(15,-3){5.66}
\SetWidth{1.0} \Line(15,-3)(30,27)
\end{picture}}}}
\def\ti5{{\scalebox{0.5}{ 
\begin{picture}(12,132)(23,-8)
\SetWidth{0.5} \SetColor{Black} \Vertex(30,117){5.66}
\SetWidth{1.0} \Line(30,117)(30,87) \SetWidth{0.5}
\Vertex(30,87){5.66} \Vertex(30,57){5.66} \Vertex(30,27){5.66}
\Vertex(30,-3){5.66} \SetWidth{1.0} \Line(30,-3)(30,27)
\Line(30,27)(30,57) \Line(30,87)(30,57)
\end{picture}}}}
\def\tj51{{\scalebox{0.5}{ 
\begin{picture}(42,102)(53,-38)
\SetWidth{0.5} \SetColor{Black} \Vertex(61,27){4.24}
\SetWidth{1.0} \Line(75,57)(90,27) \Line(60,27)(75,57)
\SetWidth{0.5} \Vertex(90,-3){5.66} \Vertex(60,27){5.66}
\Vertex(75,57){5.66} \Vertex(90,-33){5.66} \SetWidth{1.0}
\Line(90,-33)(90,-3) \Line(90,-3)(90,27) \SetWidth{0.5}
\Vertex(90,27){5.66}
\end{picture}}}}
\def\tk52{{\scalebox{0.5}{ 
\begin{picture}(42,102)(23,-8)
\SetWidth{0.5} \SetColor{Black} \Vertex(60,57){5.66}
\Vertex(45,87){5.66} \SetWidth{1.0} \Line(45,87)(60,57)
\SetWidth{0.5} \Vertex(30,57){5.66} \SetWidth{1.0}
\Line(30,57)(45,87) \SetWidth{0.5} \Vertex(30,-3){5.66}
\SetWidth{1.0} \Line(30,-3)(30,27) \SetWidth{0.5}
\Vertex(30,27){5.66} \SetWidth{1.0} \Line(30,57)(30,27)
\end{picture}}}}
\def\tl53{{\scalebox{0.5}{ 
\begin{picture}(42,102)(8,-8)
\SetWidth{0.5} \SetColor{Black} \Vertex(30,57){5.66}
\Vertex(30,27){5.66} \SetWidth{1.0} \Line(30,57)(30,27)
\SetWidth{0.5} \Vertex(30,87){5.66} \SetWidth{1.0}
\Line(30,27)(45,-3) \SetWidth{0.5} \Vertex(15,-3){5.66}
\SetWidth{1.0} \Line(15,-3)(30,27) \Line(30,57)(30,87)
\SetWidth{0.5} \Vertex(45,-3){5.66}
\end{picture}}}}
\def\tm54{{\scalebox{0.5}{ 
\begin{picture}(42,72)(8,-38)
\SetWidth{0.5} \SetColor{Black} \Vertex(30,-3){5.66}
\SetWidth{1.0} \Line(30,27)(30,-3) \Line(30,-3)(45,-33)
\SetWidth{0.5} \Vertex(15,-33){5.66} \SetWidth{1.0}
\Line(15,-33)(30,-3) \SetWidth{0.5} \Vertex(45,-33){5.66}
\SetWidth{1.0} \Line(30,-33)(30,-3) \SetWidth{0.5}
\Vertex(30,-33){5.66} \Vertex(30,27){5.66}
\end{picture}}}}
\def\tn55{{\scalebox{0.5}{ 
\begin{picture}(42,72)(8,-38)
\SetWidth{0.5} \SetColor{Black} \Vertex(15,-33){5.66}
\Vertex(45,-33){5.66} \Vertex(30,27){5.66} \SetWidth{1.0}
\Line(45,-33)(45,-3) \SetWidth{0.5} \Vertex(45,-3){5.66}
\Vertex(15,-3){5.66} \SetWidth{1.0} \Line(30,27)(45,-3)
\Line(15,-3)(30,27) \Line(15,-3)(15,-33)
\end{picture}}}}
\def\tp56{{\scalebox{0.5}{ 
\begin{picture}(66,111)(0,0)
\SetWidth{0.5} \SetColor{Black} \Vertex(30,66){5.66}
\Vertex(45,36){5.66} \SetWidth{1.0} \Line(30,66)(45,36)
\Line(15,36)(30,66) \SetWidth{0.5} \Vertex(30,6){5.66}
\Vertex(60,6){5.66} \SetWidth{1.0} \Line(60,6)(45,36)
\SetWidth{0.5}
\SetWidth{1.0} \Line(45,36)(30,6) \SetWidth{0.5}
\Vertex(15,36){5.66}
\end{picture}}}}
\nc{\ignore}[1]{{}}
\nc{\mrm}[1]{{\rm #1}}
\nc{\dirlim}{\displaystyle{\lim_{\longrightarrow}}\,}
\nc{\invlim}{\displaystyle{\lim_{\longleftarrow}}\,}
\nc{\vep}{\varepsilon} \nc{\ep}{\epsilon}
\nc{\sigmat}{\widetilde\sigma}
\nc{\ostar}{\overline{*}}
\nc{\mchar}{\mrm{Char}}
\nc{\Hom}{\mrm{Hom}}
\nc{\id}{\mrm{id}}
\nc{\remark}{\noindent{\bf{Remark:}}}
\nc{\remarks}{\noindent{\bf{Remarks:}}}
 \nc{\delete}[1]{}
 \nc{\grad}[1]{^{({#1})}}
 \nc{\fil}[1]{_{#1}}
\nc{\BA}{{\Bbb A}} \nc{\CC}{{\Bbb C}} \nc{\DD}{{\Bbb D}}
\nc{\EE}{{\Bbb E}} \nc{\FF}{{\Bbb F}} \nc{\GG}{{\Bbb G}}
\nc{\HH}{{\Bbb H}} \nc{\LL}{{\Bbb L}} \nc{\NN}{{\Bbb N}}
\nc{\PP}{{\Bbb P}} \nc{\QQ}{{\Bbb Q}} \nc{\RR}{{\Bbb R}}
\nc{\TT}{{\Bbb T}} \nc{\VV}{{\Bbb V}} \nc{\ZZ}{{\Bbb Z}}
\nc{\Cal}[1]{{\mathcal {#1}}}
\nc{\mop}[1]{\mathop{\hbox {\rm #1} }}
\nc{\smop}[1]{\mathop{\hbox {\eightrm #1} }}
\nc{\mopl}[1]{\mathop{\hbox {\rm #1} }\limits}
\nc{\frakg}{{\frak g}}
\nc{\g}[1]{{\frak {#1}}}
\def \restr#1{\mathstrut_{\textstyle |}\raise-8pt\hbox{$\scriptstyle #1$}}
\def \srestr#1{\mathstrut_{\scriptstyle |}\hbox to
  -1.5pt{}\raise-4pt\hbox{$\scriptscriptstyle #1$}}
\nc{\wt}{\widetilde}
\nc{\wh}{\widehat}
\nc{\un}{\hbox{\bf 1}}
\nc{\redtext}[1]{\textcolor{red}{\tt #1}}
\nc{\bluetext}[1]{\textcolor{blue}{#1}}
\nc{\comment}[1]{[[{\tt {#1}}]] }
\nc{\R}{{\mathbb R}}
\nc\fleche[1]{\mathop{\hbox to #1 mm{\rightarrowfill}}\limits}
\def\semi{\mathrel{\times}\kern -.85pt\joinrel\mathrel{\raise 1.4pt\hbox{${\scriptscriptstyle |}$}}}
\begin{document}

\title[Some dendriform equations]
      {Dendriform Equations}

\author{Kurusch Ebrahimi-Fard}
\address{Laboratoire MIA, 
         Universit\'e de Haute Alsace, 
         4 rue des Fr\`eres Lumi\`ere, 
         68093 Mulhouse, France}
 \email{kurusch.ebrahimi-fard@uha.fr}
 \urladdr{http://www.th.physik.uni-bonn.de/th/People/fard/}

\author{Dominique Manchon}
\address{Universit\'e Blaise Pascal,
         C.N.R.S.-UMR 6620
         63177 Aubi\`ere, France}
         \email{manchon@math.univ-bpclermont.fr}
         \urladdr{http://math.univ-bpclermont.fr/~manchon/}

\date{June 10th, 2008\\
\noindent {\footnotesize{${}\phantom{a}$ Mathematics Subject
Classification 2000}: 16W25, 17A30, 17D25, 37C10 Secondary: 05C05, 81T15\\
Keywords: linear differential equation; linear integral equation; Lie bracket flow; Riccati equation; Magnus expansion; Fer expansion; dendriform algebra; pre-Lie algebra; Hopf algebra; Rota--Baxter algebra; planar rooted trees. }}

\begin{abstract}
We investigate solutions for a particular class of linear equations in dendriform algebras. Motivations as well as several applications are provided. The latter follow naturally from the intimate link between dendriform algebras and Rota--Baxter operators, e.g. the Riemann integral map or Jackson's $q$-integral.       
\end{abstract}

\maketitle

\tableofcontents

\section{Introduction}
\label{sect:intro}

This paper sets out to develop a systematic study of a particular class of linear equations in Loday's dendriform algebras~\cite{Loday}. The guiding principle follows from the intimate connection of such algebras to associative Rota--Baxter algebras~\cite{Aguiar00,Baxter,E,Rota}, where analog equations naturally appear in the context of applications ranging from numerical analysis to renormalization in perturbative quantum field theory. The reader interested in more details may want to consult some of the following references \cite{atkinson,EMP07b,EGP07,EMP07}. 

In the setting of dendriform algebras we obtain the recursive formula for the logarithm of the solutions of the two linear dendriform equations $Y = \un + \lambda a \prec Y$ and $Z = \un - \lambda Z \succ a$, $a \in D$, in $D[[\lambda]]$, where $(D,\prec,\succ)$ is a unital dendriform algebra. We also present the solutions to these equations as an infinite product expansion of exponentials. This way, our work provides a refined approach to the classical Magnus~\cite{Magnus} and Fer expansions~\cite{Fer}, well-known in the field of numerical analysis in the context of approximations of ordinary first order linear differential and integral equations, e.g. see \cite{BCOR98,Iserles84,Iserles}. In our approach both expansions involve the pre-Lie and associative products naturally associated with the underlying dendriform structure and display their interesting interplay. Let us emphasize that the language of dendriform and pre-Lie algebras unveils a hitherto unobserved new structure in these expansions, related to operadic aspects in the context of free pre-Lie algebra \cite{CL}. Moreover, the two linear equations above may be interpreted as first order cases with respect to the parameter $\lambda$. We will introduce equations of arbitrary order in $\lambda$ and show by transforming the higher order equations into dendriform matrix equations that its solutions follow from the first order ones.

Putting the two dendriform products together into a single equation: 
\begin{equation}
\label{gen-order1}
	X = a + \lambda X \succ b + \lambda c \prec X, 
\end{equation}
we find a natural link to Lie bracket flow equations. However, for general dendriform algebras the Lie bracket is replaced by a pre-Lie poduct. We will show how the solutions $Y=Y(c)$ and $Z=Z(b)$ of the first two order recursions above naturally lead to the solution of the general equation \eqref{gen-order1}: 
\begin{equation}
\label{gen-order1sol}
	X = Y*(Y^{-1} \succ a \prec Z^{-1})*Z.
\end{equation}
Here, the product $*$ is the associative product in the dendriform algebra $D$. In fact, we observe that in the special case where $-b=c$, the solution $X$ can be represented in terms of two non-commuting group actions. 

Eventually our approach brings together the works of Magnus \cite{Magnus}, Fer \cite{Fer} and Baxter \cite{Baxter} on linear initial value problems and the corresponding integral equations. A simple example might help to elucidate this point of view. Details will be provided in the sequel. Let $k$ be a field of characteristic zero and $\mathcal F$  an --associative-- $k$-algebra, say, of operator-valued functions on the real line, e.g. smooth $n \times n$ matrix-valued functions. Recall that the solution to the initial value problem:
$$
	\dot{Y}(t)=[Y(t),A(t)]=(YA)(t)-(AY)(t),\quad t>0,\ Y(0)=Y_0,
$$ 
for $A \in \mathcal F$, can be written in terms of the solution of the initial value problem $\dot{U}(t)=A(t)U(t)$, $U(0)=\un$, i.e. $Y(t)=-Ad_{U(t)}(Y_0)=:-U(t)Y_0U^{-1}(t)$. Following Magnus~\cite{Magnus}, see also \cite{Iserles00}, we can write $U(t)=\exp(\Omega(A)(t))$, where $\Omega(A)(0)=0$ and: 
$$
	\dot\Omega(A)(t)=\frac{ad_\Omega}{\exp(ad_\Omega) -1}(A)(t),
$$
which implies:
$$
	Y(t)=-Ad_{\exp(\Omega(A)(t))}(Y_0).
$$ 
Here, we may remark that Magnus' exponential solution gives rise to the following nontrivial identity:
$$
	{\mathcal{T}}\!\!\exp(I(A)(t)) := \un + I(A)(t) + I(AI(A))(t) + I(AI(AI(A))) + \cdots =  \exp(\Omega(A)(t)),
$$
$I(X)(t)=\int_0^t X(s)ds$ denotes the Riemann integral on $\mathcal F$. The expression on the left hand side of the first equality is known as the time-ordered exponential. i.e. the ${\mathcal{T}}$-operation denotes the time ordering operator \cite{OteoRos}. We may rewrite the Lie bracket flow equation for $A \in \mathcal F$, as follows:
\begin{equation}
\label{eq:IVP}
	\dot{Y}(t) = [I(\dot{Y})(t)+Y_0,A(t)] = [Y_0,A(t)] + [I(\dot{Y})(t),A(t)] = A_0(t) + [I(\dot{Y})(t),A(t)].
\end{equation}
The crucial observation is that recursion (\ref{eq:IVP}) actually takes place inside a pre-Lie algebra. Indeed, the integration by parts rule implies that the product defined in terms of the Riemann integral and the ordinary Lie bracket, i.e. $X \rhd Z := [I(X),Z]$ satisfies the --left-- pre-Lie identity, leading to the pre-Lie flow equation:
\begin{equation}
\label{eq:pre-LieIVP}
	\dot{Y}(t) = A_0(t) + (\dot{Y} \rhd A)(t),\qquad \dot{Y}(0)=A_0(0)=[Y_0,A(0)]. 
\end{equation}	
One may interpret this as a linear initial value problem on a pre-Lie algebra. When expanding the Lie bracket, $[I(X),Z]=I(X)Z - ZI(X)$, the newly introduced pre-Lie product, $X \rhd Z=[I(X),Z]$ on $\mathcal F$, can be written as a linear combination of two binary compositions, $X\succ Z:=I(X)Z$, and $Z\prec X:=ZI(X)$, which we call left- and right-dendriform products, respectively: 
 \allowdisplaybreaks{
\begin{eqnarray}
\label{eq:dendriformIVP1}
	\dot{Y}(t) = A_0(t) + (\dot{Y} \succ A)(t) - ( A \prec \dot{Y})(t),\qquad \dot{Y}(0)=A_0(0). 
\end{eqnarray}}
The rules these left- and right-dendriform compositions have to satisfy so as to provide a pre-Lie product are called dendriform axioms \cite{Loday}. In the next section we will present them in detail. Iserles in \cite{Iserles01} further generalized the above Lie bracket flow by looking at the initial value problem $\dot{Y}=YA-BY$, $t>0$, $Y(0)=Y_0$ which can be written: 
 \allowdisplaybreaks{
\begin{eqnarray}
	\dot{Y}(t)=YA(t)-BY(t)&=&(I(\dot{Y})(t)+Y_0)A(t)-B(t)(I(\dot{Y})(t)+Y_0) \nonumber\\
				         &=& C(t) + (I(\dot{Y})A)(t) - (BI(\dot{Y}))(t)\label{eq:iserlesIVP},
\end{eqnarray}	
with $C(t):=Y_0A(t)-B(t)Y_0$. Using the dendriform products, $\prec$ and $\succ$, this writes elegantly as follows:
\begin{equation}
\label{eq:dendriformIVP2}
	\dot{Y} = C + \dot{Y} \succ A - B \prec \dot{Y} , \qquad \dot{Y}(0)=C(0)=Y_0A(0)-B(0)Y_0. 	
\end{equation}	
As we will see in the next section, the surprising fact is, that the same dendriform rules also give rise to an associative product, and both together, the associative and the pre-Lie product may be used to find an exponential solution for Iserles' problem (\ref{eq:dendriformIVP2}), and hence also for the above recursions (\ref{eq:pre-LieIVP}), respectively (\ref{eq:dendriformIVP1}), though in the more general context of dendriform algebras. 

The link to Baxter's work \cite{Baxter} follows naturally from the fact that we solve the above recursions using purely dendriform algebra, which allows us to replace the Riemann integral map $I$ by more general integral type operators, e.g. Rota--Baxter operators of arbitrary weight \cite{E}, including Jackson's $q$-integral as well as Riemann sums.

Finally, we will present several applications at the end of the article, including the noncommutative generalization of a Spitzer type identity first presented in a commutative context by Vogel in \cite{Vogel}, coming from probability theory. A link to Riccati's differential equation is outlined, which we hope to deepen in the near future. We will use planar rooted trees to encode the pre-Lie Magnus expansion and also comment on an observation made in an earlier publication~\cite{EM}, indicating a reduced number of terms in the pre-Lie Magnus and Fer expansions due to the pre-Lie relation.\\ 

The paper is organized as follows. In section \ref{sect:dpse} we recall the definition of a dendriform algebra and introduce the associated left and right pre-Lie products. The augmentation $\overline D$ of a dendriform algebra $D$ by a unit $\un$ is also recalled (\cite{C}, \cite{R}, \cite{R2}). In section \ref{sect:main} we introduce our most general equation in $\overline D[[\lambda]]$: it is said to be of degree $(m,n)$ as it involves monomials of degree $\le m+1$ (resp. $n+1$) with respect to the right (resp. left) dendriform product $\succ$ (resp. $\prec$). We prove in detail that \eqref{gen-order1sol} gives a complete solution of the case $(m,n)=(1,1)$, which corresponds to the equation \eqref{gen-order1} for $a,b,c \in D$, in terms of the solutions of the cases of degree $(1,0)$ and $(0,1)$. The particular (and simpler) case, where $a=\un$ and $b,c\in D$ is also considered. Sections \ref{sect:preLieMag} and \ref{sect:preLieFer} are devoted to the pre-Lie Magnus expansion and the pre-Lie Fer expansion respectively, recalling the results of \cite{EM}. In section \ref{sect:secondorder} we introduce matrix dendriform algebras, which allow us to solve the equations of degree $(2,0)$ and $(0,2)$. The procedure is generalized to equations of degree $(m,0)$ and $(0,n)$ in the following section. Finally, section \ref{sect:appl} is devoted to applications in the context of Rota--Baxter algebras, which furnish a large collection of dendriform algebras \cite{E}. We touch initial value problems in matrix algebras \cite{Iserles01}, an identity discovered by W.~Vogel \cite{Vogel}, and show how the Riccati differential equation is linked with the degree $(2,0)$ dendriform equation. Eventually, we introduce a noncommutative Butcher series like formula for the pre-Lie Magnus expansion and indicate how the pre-Lie structure may be used to reduce the number of terms in it.


\section{Dendriform power sums expansions}
\label{sect:dpse}

Loday's dendriform algebra may be seen at the same time as an associative as well as a pre-Lie algebra. Indeed, it was found that in any dendriform algebra both products can be written as particular linear combinations of the dendriform left and right binary compositions. Main examples of dendriform algebras are provided by the shuffle and quasi-shuffle algebra as well as associative Rota--Baxter algebras. The link between dendriform algebras and the latter is quite natural as shuffle type products and their noncommutative generalizations appear in the construction of free associative Rota--Baxter algebras~\cite{eg2005}. 

We briefly introduce the setting of dendriform algebra. Let $k$ be a field of characteristic zero. Recall that a {\sl dendriform algebra\/}~\cite{Loday} over the field $k$ is a $k$-vector space $A$ endowed with two bilinear operations, denoted $\prec$ and $\succ$ and called right and left products, respectively, subject to the three axioms below:
 \allowdisplaybreaks{
\begin{eqnarray}
  (a\prec b)\prec c  &=& a\prec(b \prec c + b \succ c)        \label{A1}\\
  (a\succ b)\prec c  &=& a\succ(b\prec c) 				 \label{A2}\\
   a\succ(b\succ c)  &=& (a \prec b + a \succ b)\succ c        \label{A3}.
\end{eqnarray}}
One readily verifies that these relations yield associativity for the product 
\begin{equation}
\label{dendassoc}
	a * b := a \prec b + a \succ b.  
\end{equation}	
However, at the same time the dendriform relations imply that the bilinear products
$\rhd$ and $\lhd$ defined by:
\begin{equation}
\label{def:prelie}
    a \rhd b:= a\succ b-b\prec a,
    \hskip 12mm
    a \lhd b:= a\prec b-b\succ a,
\end{equation}
are {\sl{left pre-Lie}} and {\sl{right pre-Lie}}, respectively, which means that we have:
 \allowdisplaybreaks{
\begin{eqnarray}
    (a\rhd b)\rhd c-a\rhd(b\rhd c)&=& (b\rhd a)\rhd c-b\rhd(a\rhd c),\label{prelie1}\\
    (a\lhd b)\lhd c-a\lhd(b\lhd c)  &=& (a\lhd c)\lhd b-a\lhd(c\lhd b).\label{prelie2}
\end{eqnarray}}
Observe that $a \rhd b = - b \lhd a$. The associative operation $*$ and the pre-Lie operations $\rhd$, $\lhd$ all define the same Lie bracket:
\begin{equation}
    [\![a,b]\!]:=a*b-b*a=a\rhd b-b\rhd a=a\lhd b-b\lhd a.
\end{equation}
Observe that:
$$
	a*b + b \rhd a = a \succ b + b \succ a.
$$
We stress here that in the commutative case (commutative dendriform algebras are also named {\sl Zinbiel algebras\/} \cite{Loday2}, \cite{Loday}), the left and right operations are further required to identify, so that $a \succ b
= b \prec a$. In this case both pre-Lie products vanish. A natural example of a commutative dendriform algebra is given by the shuffle algebra in terms of half-shuffles \cite{Sch58}. Any associative algebra $A$ equipped with a linear integral like map $I: A \to A$ satisfying the integration by parts rule also gives a dendriform algebra, when $a \prec b:=aI(b)$ and $a \succ b:=I(a)b$. 

Loday and Ronco introduced in~\cite{LodayRonco} the notion of {\textsl{tridendriform algebra}} $T$ equipped with three binary operations, $<, >$ and $\bullet$, satisfying seven dendriform type axioms:
 \allowdisplaybreaks{
\begin{eqnarray}
    && (x < y) < z = x < (y \star z),   \hskip 4mm
       (x > y) < z = x > (y < z),       \hskip 4mm
       (x \star y) > z = x > (y > z),   \\
    (x > y) \bullet z \!\!\!&=&\!\!\! x > (y \bullet z),   \hskip 1mm
       (x < y) \bullet z = x \bullet (y> z),    \hskip 1mm
       (x \bullet y) < z = x \bullet (y < z),   \hskip 1mm
       (x \bullet y) \bullet z = x \bullet (y \bullet z), \notag
       \label{DT}
\end{eqnarray}}
yielding associativity for the product $x \star y := x < y + x > y + x \bullet y$. First, observe that the category of dendriform algebras can be identified with the subcategory of objects in the category of tridendriform algebras with $\bullet=0$. Moreover, one readily verifies for a tridendriform algebra $(T,<,>,\bullet)$ that $(D_T,\prec_\bullet,\succ)$, where $\prec_\bullet :=\ < +\ \bullet$ and $\succ:=>$, is a dendriform algebra~\cite {E}. Any associative algebra $A$ closed under a finite Riemann sum operator $S: A \to A$ satisfying a corresponding modified integration by parts rule taking the diagonal into account gives a tridendriform algebra, when $a \prec b:=aS(b)$, $a \succ b:=S(a)b$ and $a \bullet b:= \mp ab$. Here, the sign of the last composition depends on whether the diagonal terms must be subtracted or added, respectively.

Let $\overline A = A \oplus k.\un$ be our dendriform algebra augmented by a unit $\un$, such that:
\begin{equation}
\label{unit-dend}
    a \prec \un := a =: \un \succ a
    \hskip 12mm
    \un \prec a := 0 =: a \succ \un,
\end{equation}
implying $a*\un=\un*a=a$. Note that $\un*\un=\un$, but that $\un \prec \un$ and $\un \succ \un$ are not defined, see~\cite{R}, \cite{C} for more details.

We recursively define the following set of so-called left- respectively right-dendriform iterated elements of degree $n \in \mathbb{N}$ in $\overline{A}[[\lambda]]$ for fixed $x_1,\ldots, x_n \in A$:
 \allowdisplaybreaks{
\begin{eqnarray*}
	w^{(n)}_{\succ}(x_1,\ldots, x_n) &:=&   \bigl(\cdots (x_1 \succ x_2) \succ x_3  \cdots \bigr) \succ x_{n} \\
	w^{(n)}_{\prec}(x_1,\ldots, x_n) &:=&  x_1 \prec \bigl( x_2 \prec \cdots (x_{n-1} \prec x_n)  \cdots \bigr).
\end{eqnarray*}}
Defining $w^{(0)}_{\prec}(x_1,\ldots, x_n) := \un =:w^{(0)}_{\succ}(x_1,\ldots, x_n)$, we may write these  left- respectively right-dendriform iterated elements more compactly: 
 \allowdisplaybreaks{
\begin{eqnarray*}
    w^{(n)}_{\succ}(x_1,\ldots, x_n) &:=& \bigl(w^{(n-1)}_\succ(x_1,\ldots, x_{n-1})\bigr)\succ x_n\\
    w^{(n)}_{\prec}(x_1,\ldots, x_n) &:=& x_1 \prec \bigl(w^{(n-1)}_\prec(x_2,\ldots, x_n)\bigr).
\end{eqnarray*}}
In case that $x_1=\cdots = x_n=x$ we simply write $w^{(n)}_{\prec}(x,\ldots, x)= w^{(n)}_{\prec}(\{x\}_n)=w^{(n)}_{\prec}(x)$ and 
 $w^{(n)}_{\succ}(x,\ldots, x)= w^{(n)}_{\succ}(\{x\}_n)=w^{(n)}_{\succ}(x)$.

Let us recall from Chapoton and Ronco~\cite{C, R, R2} that, on the free dendriform algebra on one generator $a$, augmented by a unit element, there is a Hopf algebra structure with respect to the associative product~(\ref{dendassoc}). The elements $w^{(n)}_{\succ} :=w^{(n)}_{\succ}(a)$ generate a cocommutative graded connected Hopf subalgebra $(H,*)$ with
coproduct:
 \allowdisplaybreaks{
\begin{eqnarray}
    \Delta(w^{(n)}_{\succ})=w^{(n)}_{\succ} \otimes \un + \un \otimes w^{(n)}_{\succ}
    + \sum_{0<m<n} w^{(m)}_{\succ} \otimes w^{(n-m)}_{\succ},
\label{Hopf}
\end{eqnarray}}
and antipode $S(w^{(n)}_{\succ})=(-1)^{n} w^{(n)}_{\prec}$. It is easy to check that the 
$w^{(n)}_{\succ}$ generate a free associative subalgebra of the free dendriform algebra on $a$ for the
associative product, so that one can use the previous formula for the coproduct action on $w^{(n)}_{\succ}$ as
a definition of the Hopf algebra structure on $H$. As an important consequence, it follows that $H$ is isomorphic, 
as a Hopf algebra, to the Hopf algebra of noncommutative symmetric functions \cite{Gelfand}.  

We also define the following set of iterated left and right pre-Lie products~(\ref{def:prelie}). For $n>0$, let $x_1,\ldots,x_n \in A$:
 \allowdisplaybreaks{
\begin{eqnarray}
    \ell^{(n)}(x_1,\dots,x_n) &:=&
    \bigl( \cdots  (x_1 \rhd x_2) \rhd x_3  \cdots \bigr) \rhd x_{n} 
\label{leftRBpreLie}\\
    r^{(n)}(x_1,\dots,x_n) &:=&
    x_1 \lhd \bigl( x_2 \lhd \cdots (x_{n-1} \lhd x_n) \cdots \bigr).
\label{rightRBpreLie}
\end{eqnarray}}
More compactly, for a fixed single element $x \in A$ we may write for $n>0$:
 \allowdisplaybreaks{
\begin{eqnarray}
    \ell^{(n+1)}(x) = \bigl(\ell^{(n)}(x)\bigr)\rhd x
\quad\ {\rm{ and }} \quad\
    r^{(n+1)}(x) = x \lhd \bigl(r^{(n)}(x)\bigr)
\label{def:pre-LieWords}
\end{eqnarray}}
where $\ell^{(1)}(x):=x=:r^{(1)}(x)$.

Let us define the exponential and logarithm map in terms of the associative
product~(\ref{dendassoc}). For any $x \in \lambda \overline A[[\lambda]]$:
$$
	\exp^*(x):=\sum_{n \geq 0} x^{*n}/n!  
	\quad\ {\rm{resp.}} \quad\ 
	\log^*(\un+x):=-\sum_{n>0}(-1)^nx^{*n}/n. 
$$


\section{Power sums expansions in unital dendriform algebras}
\label{sect:main}

Let $(A,\prec,\succ)$ be a dendriform algebra, $\overline A$ its augmentation by a unit $\un$. Let us introduce the following {\it{equation of degree $(m,n)$}} in $\overline A[[\lambda ]]$:
\begin{equation}
\label{maxEq}
     X  =  a +  \sum_{q=1}^{m} \lambda^{q} \omega^{(q+1)}_\succ(X,b_{q1},\ldots, b_{qq})  
                +  \sum_{p=1}^{n} \lambda^{p} \omega^{(p+1)}_\prec(c_{p1},\ldots,c_{pp},X),
\end{equation}
with $m, n \in \mathbb{N}_+$ and $a$, $b_{ij}$, $1\le j \le i \le m$, $c_{ij}$, $1\le j \le i \le n$ in $A$. Here, the degree $deg(X) \in \mathbb{N}^2_+$ of $Z$ is defined as the maximal degree of the left- respectively right-dendriform iterated elements in $X$ minus one. In this work we give a detailed account on the solution of the cases $(m,0)$, $(0,n)$ and $(1,1)$, leaving the general case for further studies. 

We will first explore the {\it{equation of degree $(1,1)$}}:
\begin{equation}
\label{eq:degree1relation}
     X = a +   \lambda \omega^{(2)}_\succ(X,b)  
              +   \lambda \omega^{(2)}_\prec(c,X).
\end{equation}
in $\overline A[[\lambda ]]$ for fixed elements $a,b,c \in A$. 


\subsection{Equation of degree $(1,1)$}
\label{ssect:degree2case}

We first study the case where $deg(Z)=(1,1)$ in (\ref{maxEq}). Its solution follows from the solutions of the cases $(1,0)$ and $(0,1)$. Later, we will see how the more general cases $(m,0)$ and $(0,n)$, $m$ and $n$ bigger than one reduce to the cases $(1,0)$ and $(0,1)$ by embedding the dendriform algebra $(A,\prec,\succ)$ into the matrix dendriform algebra $\Cal M_N(A)$ of size $N=N_{\prec}$ and $N=N_{\succ}$, respectively, with 
$$
	N_{\succ}:=1 + \frac{(m-1)m}{2} \quad\ {\rm{and}} 
	\quad\
	\ N_{\prec}:=1 + \frac{(n-1)n}{2}.
$$ 

Relation~(\ref{maxEq}) for $m=n=1$ simplifies to the following equation:
\begin{equation}
\label{eq:degree1relation}
     X = a +   \lambda X \succ b +  \lambda c \prec X,
\end{equation}
in $\overline A[[\lambda ]]$ for fixed elements $a,b,c \in A$.  

First, observe that for $c=-b \in A$, relation (\ref{eq:degree1relation}) reduces to a degree one recursion involving the dendriform left pre-Lie product~(\ref{def:prelie}):
\begin{equation}
\label{eq:pre-Lierelation}
     X = a +   \lambda X \rhd b .
\end{equation}
in $\overline A[[\lambda ]]$ for fixed elements $a,b \in A$. For this pre-Lie type recursion the special case $a=\un$ leads to the simple solution $X=\un$, since $\un \rhd x = x \lhd \un = 0$. 
  
\smallskip
  
Let us start to find solutions of (\ref{eq:degree1relation}). 

\begin{prop}\label{prop:a=1} 
Let  $(\overline{A},\prec,\succ)$ be a unital dendriform algebra. Let $Y=Y(c)$ and $Z=Z(b)$, $c,b \in A$ be solutions of the  recursions:
\begin{equation}
\label{eq:relationYZ}
	Y = \un + \lambda c  \prec Y  
	\quad\ {\rm{resp.}} \quad\ 
	 Z = \un + \lambda Z \succ b,
\end{equation}
of degree $deg(Y)=(0,1)$ respectively $deg(Z)=(1,0)$, in $\overline A[[\lambda]]$. One verifies that the product $ Y*Z $ gives a solution to equation (\ref{eq:degree1relation}) for the particular case $a=\un \in \overline{A}$. 
\end{prop}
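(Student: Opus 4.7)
The plan is to verify directly that $X := Y * Z$ satisfies the equation $X = \un + \lambda X \succ b + \lambda c \prec X$, by using the dendriform axioms to push $\succ b$ and $c \prec$ past the associative product $*$, and then substituting the defining relations for $Y$ and $Z$.

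First, I would use axioms (A3) and (A1) respectively, to compute
\[
	(Y*Z) \succ b = Y \succ (Z \succ b), \qquad c \prec (Y*Z) = (c \prec Y) \prec Z,
\]
being careful that these identities still make sense in $\overline A[[\lambda]]$ since $b,c \in A$ (so the only potentially undefined case $\un \succ \un$ or $\un \prec \un$ never arises). From the defining equations~\eqref{eq:relationYZ}, I read off
\[
	\lambda (Z \succ b) = Z - \un, \qquad \lambda (c \prec Y) = Y - \un.
\]
Substituting these yields
\[
	\lambda (Y*Z) \succ b = Y \succ (Z - \un), \qquad \lambda c \prec (Y*Z) = (Y - \un) \prec Z.
\]

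Next I would expand both sides writing $Y = \un + Y'$ and $Z = \un + Z'$, with $Y',Z' \in A[[\lambda]]$. Using the unit axioms~\eqref{unit-dend} (which apply without trouble because $Y'$ and $Z'$ carry no $\un$-component), I get
\[
	Y \succ (Z-\un) = Y \succ Z' = Z' + Y' \succ Z', \qquad (Y-\un) \prec Z = Y' \prec Z = Y' + Y' \prec Z'.
\]
Adding $\un$ to the sum of these two expressions gives $\un + Y' + Z' + Y'\succ Z' + Y' \prec Z' = \un + Y' + Z' + Y'*Z'$, which is exactly the expansion of $Y * Z$. Hence $Y*Z = \un + \lambda (Y*Z)\succ b + \lambda c \prec (Y*Z)$, as required.

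The only real subtlety — and the place where I would spend most care — is the bookkeeping around the unit $\un$: axioms (A1) and (A3) used above are only valid where all terms are defined, and the identities $\un\succ\un$, $\un\prec\un$ have been left undefined in the setting of $(\overline A,\prec,\succ)$. Splitting $Y = \un + Y'$ and $Z=\un + Z'$ once and for all, and observing that $b,c\in A$, removes this difficulty, since every occurrence of $\succ$ or $\prec$ then involves at most one $\un$-factor. No degree or convergence issues arise because the whole computation takes place in $\overline A[[\lambda]]$ and $Y',Z'$ start in degree $\geq 1$ in $\lambda$.
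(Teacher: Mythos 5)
Your proof is correct and takes essentially the same route as the paper: a direct verification using axioms (A1) and (A3) together with the unit conventions \eqref{unit-dend}, the only difference being that you start from the right-hand side $\un+\lambda(Y*Z)\succ b+\lambda c\prec(Y*Z)$ and reduce it to $Y*Z$ via the splitting $Y=\un+Y'$, $Z=\un+Z'$, whereas the paper starts from $Y*Z$, substitutes the defining recursions, and regroups. Your explicit attention to avoiding the undefined expressions $\un\succ\un$ and $\un\prec\un$ is a welcome (and correct) refinement of the same argument.
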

\begin{proof}
Indeed, a simple calculation shows that:
 \allowdisplaybreaks{
\begin{eqnarray*}
	 Y*Z &=& \bigl(\un +  \lambda c  \prec Y\bigr)* \bigl(\un + Z \succ \lambda b\bigr) \\
	       &=& \un +  \lambda c  \prec Y + Z \succ  \lambda b +  \bigl( \lambda c  \prec Y\bigr)* \bigl(Z \succ  \lambda b\bigr)\\
	       &=& \un +  \lambda c  \prec Y + Z \succ  \lambda b +  \lambda^2  \bigl(c  \prec Y\bigr) \prec \bigl(Z \succ b\bigr) 
	                                      			  			          + \lambda^2  \bigl(c  \prec Y\bigr) \succ \bigl(Z \succ b\bigr)\\
	       &=& \un +  \bigl( \lambda c  \prec Y\bigr) \prec \bigl( \un + Z \succ  \lambda b \bigr) 
		              +  \bigl(\un +  \lambda c  \prec Y\bigr) \succ  \bigl(Z \succ  \lambda b\bigr)\\
		&=& \un +  \bigl( \lambda c  \prec Y\bigr) \prec Z + Y  \succ  \bigl(Z \succ  \lambda b\bigr)\\
		&=& \un +   \lambda c  \prec \bigl(Y * Z\bigr) + \bigl(Y * Z\bigr) \succ  \lambda b.
\end{eqnarray*}}
\end{proof}
It is obvious that formal solutions to (\ref{eq:relationYZ}) are given by:
\begin{equation}
\label{devt}
    Y = \sum_{n \geq 0} \lambda^nw^{(n)}_{\prec}(c)
    \hskip 15mm {\rm{resp.}} \hskip 15mm
    Z = \sum_{n \geq 0} \lambda^nw^{(n)}_{\succ}(b).
\end{equation}

Notice that, due to the definition of the Hopf algebra structure on $H$, these two series behave as group-like elements with respect to the coproduct $\Delta$, see  (\ref{Hopf}), (up to the extension of the scalars from $k$ to $k[\lambda]$ and the natural extension of the Hopf algebra structure on $H=\bigoplus_{n\ge 0}H_n$ to its completion $\hat H=\prod_{n\ge 0}H_n$ with respect to the grading). Hence, we remark here that with respect to equations (\ref{eq:relationYZ}) the solutions $\tilde{Y}=\tilde{Y}(c)$  and $\tilde{Z}=\tilde{Z}(b)$ of the equations 
\begin{equation}
\label{eq:relationYZinverse}
	\tilde{Y} =\un - \lambda \tilde{Y} \succ c 
	    \hskip 15mm {\rm{and}} \hskip 15mm
	\tilde{Z} = \un - \lambda b \prec \tilde{Z} 
\end{equation}
satisfy:
$$
	\tilde{Y}*Y= \un  = Y * \tilde{Y} 
		\quad\ {\rm{resp.}} \quad\ 
	\tilde{Z}*Z  = \un = Z * \tilde{Z}.	
$$
Indeed we have, using \eqref{devt} and the antipode:
\begin{eqnarray*}
	\tilde Y&=&\sum_{n\ge 0}(-1)^n\lambda^nw_\succ^{(n)}(c)\\
		   &=&S\Big(\sum_{n\ge 0}\lambda^nw_\prec^{(n)}(c)\Big)=S(Y)=Y^{-1},
\end{eqnarray*}
and similarly for $Z$. Hence, we may write 
$\tilde{Y}=Y^{-1}$, and $\tilde{Z}=Z^{-1}$, respectively. 								

We now slightly generalize equations (\ref{eq:relationYZ}) by allowing them to start with a non-unital element:
\begin{equation}
\label{eq:relationYZa}
	E =   a + \lambda b  \prec E  
	\quad\ {\rm{resp.}} \quad\ 
	F =   c + \lambda F \succ d,  
\end{equation}
in $\overline A[[\lambda ]]$ for fixed elements $a, b, c, d \in A$.  

Recall the recursively defined set of  left- respectively right-dendriform iterated elements of degree $n \in \mathbb{N}$ in $\overline{A}[[\lambda]]$, $n>0$:
 \allowdisplaybreaks{
\begin{eqnarray*}
    w^{(n+1)}_{\prec}(\{y\}_n,x) &:=& y \prec \bigl(w^{(n)}_\prec(\{y\}_{n-1},x)\bigr),\\
    w^{(n+1)}_{\succ}(x,\{y\}_n) &:=& \bigl(w^{(n-1)}_\succ(x,\{y\}_{n-1})\bigr)\succ y,
\end{eqnarray*}}
in $A$ and for fixed $x,y \in A$. In $\overline A[[\lambda ]]$ we find immediately the formal solutions for the equations (\ref{eq:relationYZa}):
\begin{equation*}
    E =  a + \sum_{n \geq 2} \lambda^{n-1} w^{(n)}_{\prec}(\{b\}_{n-1},a)
    \hskip 15mm {\rm{and}} \hskip 15mm
    F =  c + \sum_{n \geq 2} \lambda^{n-1} w^{(n)}_{\succ}(c,\{d\}_{n-1}).
\end{equation*}

Our first result is the following:

\begin{lem}\label{lem:m=1eq} Let  $\overline{A}$ be a unital dendriform algebra. Let $Y=Y(b)$ and $Z=Z(d)$, $b,d \in A$, be solutions in $\overline A[[\lambda ]]$ to the equations (\ref{eq:relationYZ}), respectively. For $a,c \in A$ the equations (\ref{eq:relationYZa}) are solved by: 
\begin{equation}
\label{eq:relationYZaSol}
	E =  Y * \bigl( Y^{-1} \succ  a\bigr) 
	\hskip 15mm {\rm{and}} \hskip 15mm
       F = \bigl(   c \prec Z^{-1} \bigr) * Z 
\end{equation}
in $\overline A[[\lambda ]]$, respectively.
\end{lem}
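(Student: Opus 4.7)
The plan is to verify each claimed formula by direct substitution, reducing everything to the defining dendriform axioms \eqref{A1}--\eqref{A3} together with the unit conventions \eqref{unit-dend}. Both halves of the lemma are symmetric, so I describe the first in detail and indicate how the second goes.

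The key idea for $E = Y*(Y^{-1}\succ a)$ is to split $*$ as $\prec+\succ$ and exploit the two identities $Y*Y^{-1}=\un$ and $\un\succ a=a$. First I would write
\begin{equation*}
E \;=\; Y\prec(Y^{-1}\succ a)\;+\;Y\succ(Y^{-1}\succ a).
\end{equation*}
Axiom \eqref{A3} applied to the second summand gives $Y\succ(Y^{-1}\succ a)=(Y*Y^{-1})\succ a=\un\succ a=a$, so
\begin{equation*}
E\;=\;a\;+\;Y\prec(Y^{-1}\succ a).
\end{equation*}
It then remains to show $Y\prec(Y^{-1}\succ a)=\lambda\,b\prec E$. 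Substituting $Y=\un+\lambda\, b\prec Y$ into the left factor and using $\un\prec x=0$ leaves
\begin{equation*}
Y\prec(Y^{-1}\succ a)\;=\;\lambda\,(b\prec Y)\prec(Y^{-1}\succ a),
\end{equation*}
and axiom \eqref{A1}, $(x\prec y)\prec z=x\prec(y*z)$, collapses this into $\lambda\,b\prec\bigl(Y*(Y^{-1}\succ a)\bigr)=\lambda\,b\prec E$, as required.

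For $F=(c\prec Z^{-1})*Z$ the argument is the mirror image. Writing $F=(c\prec Z^{-1})\prec Z+(c\prec Z^{-1})\succ Z$ and applying axiom \eqref{A1} to the first summand together with $c\prec\un=c$ yields $F=c+(c\prec Z^{-1})\succ Z$. Substituting $Z=\un+\lambda\,Z\succ d$ into the right factor, using $x\succ\un=0$, and then invoking axiom \eqref{A3} produces $(c\prec Z^{-1})\succ Z=\lambda\,F\succ d$, which is the desired identity.

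I do not expect any serious obstacle: everything is an algebraic identity in $\overline A[[\lambda]]$, and the solutions $Y,Z$ of \eqref{eq:relationYZ} and their inverses were already constructed in Proposition~\ref{prop:a=1} and the discussion around \eqref{eq:relationYZinverse}. The only subtlety worth flagging is that $\un\prec\un$ and $\un\succ\un$ are undefined, so one must check that the combinations $Y\prec(Y^{-1}\succ a)$, $(c\prec Z^{-1})\succ Z$, and the appeals to $(Y*Y^{-1})\succ a$ and $(c\prec Z^{-1})\prec(Z^{-1}*Z)$ only ever evaluate $\prec$ or $\succ$ at pairs where at least one argument comes from $A$ (this is automatic since $a,b,c,d\in A$ and the offending $\un\otimes\un$ terms cancel inside the parentheses before any undefined composition is applied).
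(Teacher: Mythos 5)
Your proof is correct and uses the same ingredients as the paper's: direct verification via axioms \eqref{A1} and \eqref{A3}, the identity $Y*Y^{-1}=\un$, and the unit conventions \eqref{unit-dend}; the only difference is cosmetic (you split $*$ before substituting the recursion for $Y$, so the constant term $a$ appears at once as $(Y*Y^{-1})\succ a$, whereas the paper substitutes first and recovers $a$ from $Y^{-1}\succ a+((Y-\un)*Y^{-1})\succ a$). Your closing remark about avoiding the undefined $\un\prec\un$ and $\un\succ\un$ is exactly the caveat the paper itself emphasizes, and your checks there are sound.
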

\begin{proof}
Let us verify this for the first equation. The other case works analogously.
 \allowdisplaybreaks{
\begin{eqnarray*}
	Y*\bigl( Y^{-1}  \succ  a\bigr) &=& \bigl( \un + \lambda b  \prec Y \bigr) * \bigl(Y^{-1} \succ a \bigr)\\
	                                               &=& Y^{-1} \succ a + \bigl( \lambda b  \prec Y \bigr) * \bigl(Y^{-1} \succ a \bigr)\\
	                                               &=& Y^{-1} \succ a + \bigl( \lambda b  \prec Y \bigr) \succ \bigl(Y^{-1} \succ a \bigr)
	                                                                               + \bigl( \lambda b  \prec Y \bigr) \prec \bigl(Y^{-1} \succ a \bigr)\\
	                                               &=& Y^{-1} \succ a + \Bigl(\bigl( \lambda b  \prec Y \bigr) *  Y^{-1} \Bigr) \succ  a
	                                                                               +  \lambda b \prec  \Bigl( Y * \bigl(Y^{-1} \succ a \bigr) \Bigr) \\ 
	                                               &=& Y^{-1} \succ a + \Bigl(\bigl( Y - \un \bigr) *  Y^{-1} \Bigr) \succ  a
	                                                                               +  \lambda b \prec  \Bigl( Y * \bigl(Y^{-1} \succ   a \bigr) \Bigr) \\
	                                               &=& a +  \lambda b \prec  \Bigl( Y * \bigl(Y^{-1} \succ  a \bigr) \Bigr).                                                                                                                                                     
\end{eqnarray*}}
\end{proof}

An important remark concerning the restriction $a,c \in A$ is in order. The equations (\ref{eq:relationYZa}) reduce to the ones in (\ref{eq:relationYZ}) for $a = c = \un$. However, for the solutions in (\ref{eq:relationYZaSol}) we must be careful with respect to the particular properties of the dendriform unit, i.e. we have to avoid $\un \prec \un$ respectively $\un \succ \un$,  see (\ref{unit-dend}). Indeed, for $a \in A$ one may rewrite (\ref{eq:relationYZaSol}):
$$
	E =  Y * \bigl( Y^{-1} \succ a\bigr) = Y * \bigl( \un \succ  a  - \lambda (Y^{-1} \succ b) \succ a   \bigr) 
	       						      =   Y * a - \lambda Y* (Y^{-1} \succ b) \succ a. 
$$
Written in this form we may now put $a = \un$, keeping in mind that $b \neq \un$. Hence, we find $E = Y$, i.e. that $E$, with $b$ instead of $c$, is a solution of the first relation in (\ref{eq:relationYZ}) as expected. From this we conclude our next result, i.e. the solution to equation (\ref{eq:degree1relation}). 

\begin{thm}\label{thm:dendsolution1} Let  $\overline{A}$ be a dendriform algebra augmented by a
unit $\un$~(\ref{unit-dend}). Let $Y=Y(c)$ and $Z=Z(b)$, $c,b \in A$, be solutions in $\overline A[[\lambda ]]$ to the equations $Y = \un + \lambda c  \prec Y$,  $Z = \un + \lambda Z \succ b$, of degree $(0,1),\ (1,0)$ respectively. Then the equation:
\begin{equation*}
     X = a +   \lambda X \succ b +  \lambda c \prec X,
\end{equation*}
of degree $deg(X)=(1,1)$ and $a \in A$ is solved by: 
\begin{equation}
\label{eq:relation1Sol}
	X = Y * \bigl( Y^{-1} \succ a  \prec Z^{-1} \bigr) * Z.
\end{equation}
\end{thm}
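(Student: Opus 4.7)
The plan is to verify directly that $X := Y * W * Z$, with $W := Y^{-1} \succ a \prec Z^{-1}$ (unambiguous by \eqref{A2}), satisfies the degree-$(1,1)$ equation. I will expand $\lambda c \prec X$, $\lambda X \succ b$, and $X$ itself separately using the three dendriform axioms, and show that the residue collapses to $a$ via the telescoping identities $Y * Y^{-1} = Z^{-1} * Z = \un$.

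The first step is to push $\succ b$ and $c \prec$ into the interior of $Y * W * Z$. Iterated use of \eqref{A3} gives $(Y * W * Z) \succ \lambda b = Y \succ \bigl(W \succ (Z \succ \lambda b)\bigr)$; substituting $\lambda (Z \succ b) = Z - \un$ from the defining equation for $Z$, and observing that $W \succ \un = 0$ because $W \in A[[\lambda]]$, this reduces to $\lambda X \succ b = Y \succ (W \succ Z)$. Dually, \eqref{A1} gives $\lambda c \prec (Y * W * Z) = (\lambda c \prec Y) \prec (W * Z) = (Y - \un) \prec (W * Z)$; since $W * Z$ also lies in $A[[\lambda]]$ (no unit component), the $\un$-contribution vanishes and I obtain $\lambda c \prec X = Y \prec (W * Z)$.

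Expanding $W * Z = W \prec Z + W \succ Z$ then yields $\lambda c \prec X + \lambda X \succ b = Y \prec (W \prec Z) + Y \prec (W \succ Z) + Y \succ (W \succ Z)$, while $X = Y * (W * Z)$ decomposes into exactly the same three summands plus the extra term $Y \succ (W \prec Z)$. So the remaining task is to prove $Y \succ (W \prec Z) = a$. For this, write $W = (Y^{-1} \succ a) \prec Z^{-1}$ and apply \eqref{A1}: $W \prec Z = (Y^{-1} \succ a) \prec (Z^{-1} * Z) = (Y^{-1} \succ a) \prec \un = Y^{-1} \succ a$; then \eqref{A3} closes the computation via $Y \succ (Y^{-1} \succ a) = (Y * Y^{-1}) \succ a = \un \succ a = a$.

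The main obstacle I anticipate is not algebraic but is the careful bookkeeping of the augmentation unit: since $\un \prec \un$ and $\un \succ \un$ are undefined, every application of \eqref{A1} or \eqref{A3} that peels off a $\un$-summand must land on an element of $A[[\lambda]]$. The key technical remark, used twice, is that $W$ and $W * Z$ both lie in $A[[\lambda]]$ with no unit component, so that $\un \prec (W * Z) = 0$ and $W \succ \un = 0$ are legitimate evaluations that happen to vanish, which is precisely what allows the $\un$-parts peeled off from $Y - \un$ and $Z - \un$ to be discarded harmlessly.
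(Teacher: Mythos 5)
Your proof is correct, and every application of the axioms is legitimate: the elements you feed into \eqref{A1}--\eqref{A3} and into the unit conventions \eqref{unit-dend} always have at most one factor carrying a $\un$-component, and you rightly flag the two places ($W\succ\un=0$ and $\un\prec(W*Z)=0$) where a unit must be discarded. The paper proves the same statement by the same basic method --- direct verification using the recursions $Y=\un+\lambda c\prec Y$, $Z=\un+\lambda Z\succ b$ and the inverses $Y*Y^{-1}=Z^{-1}*Z=\un$ --- but organizes the computation as one long chain of equalities: it substitutes the recursion for $Y$ into the leftmost factor of $Y*(Y^{-1}\succ a\prec Z^{-1})*Z$, expands, recognizes $\lambda c\prec X$ along the way, then substitutes the recursion for $Z$ to extract $\lambda X\succ b$, with roughly a dozen intermediate lines. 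Your decomposition is tidier: by writing $X=Y*(W*Z)$ as the four summands $Y\prec(W\prec Z)+Y\prec(W\succ Z)+Y\succ(W\prec Z)+Y\succ(W\succ Z)$ and separately identifying $\lambda c\prec X=Y\prec(W*Z)$ and $\lambda X\succ b=Y\succ(W\succ Z)$, you reduce the whole theorem to the single telescoping identity $Y\succ(W\prec Z)=(Y*Y^{-1})\succ a\prec(Z^{-1}*Z)=a$. This buys a proof that is shorter, manifestly symmetric in the roles of $Y$ and $Z$, and makes transparent exactly where the two inverses are consumed; the paper's version, by contrast, exhibits explicitly how the right-hand side $a+\lambda X\succ b+\lambda c\prec X$ reassembles term by term.
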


\begin{proof}
Let us verify this in detail. Recall equations  (\ref{eq:relationYZinverse}):
 \allowdisplaybreaks{
\begin{eqnarray*}
 \lefteqn{Y * \bigl( Y^{-1} \succ a  \prec Z^{-1} \bigr) * Z 
 = \Bigl(\bigl( \un + \lambda c  \prec Y \bigr)* \bigl( Y^{-1} \succ a  \prec Z^{-1} \bigr)\Bigr) * Z} \nonumber \\
 &=& \bigl( Y^{-1} \succ a  \prec Z^{-1} \bigr) * Z 
 			+ \Bigl( \bigl( \lambda c  \prec Y \bigr) \prec \bigl( Y^{-1} \succ a  \prec Z^{-1} \bigr)\Bigr) * Z \\
        &&\hspace{5cm}
			+ \Bigl(\bigl( \lambda c  \prec Y \bigr) \succ \bigl( Y^{-1} \succ a  \prec Z^{-1} \bigr)\Bigr) * Z \nonumber\\
 &=& \bigl( Y^{-1} \succ a  \prec Z^{-1} \bigr) * Z 
 			+ \Bigl( \lambda c  \prec \bigl( Y * \bigl( Y^{-1} \succ a  \prec Z^{-1} \bigr)\bigr)\Bigr) * Z \\
        &&\hspace{5cm}
			+ \Bigl(\bigl(\bigl( \lambda c  \prec Y \bigr) * Y^{-1} \bigr)\succ \bigl(a  \prec Z^{-1} \bigr)\Bigr) * Z \nonumber\\
 &=& \bigl( Y^{-1} \succ a  \prec Z^{-1} \bigr) * Z 
 		+ \Bigl( \lambda c  \prec \bigl( Y * \bigl( Y^{-1} \succ a  \prec Z^{-1} \bigr)\bigr)\Bigr) \succ Z \\
        &&
  			+ \Bigl( \lambda c  \prec \bigl( Y * \bigl( Y^{-1} \succ a  \prec Z^{-1} \bigr)\bigr)\Bigr) \prec Z 		
			   + \Bigl(\bigl(\bigl( \lambda c  \prec Y \bigr) * Y^{-1} \bigr)\succ \bigl(a  \prec Z^{-1} \bigr)\Bigr) * Z \nonumber\\
 &=& \bigl( Y^{-1} \succ a  \prec Z^{-1} \bigr) * Z 
 		+ \Bigl( \lambda c  \prec \bigl( Y * \bigl( Y^{-1} \succ a  \prec Z^{-1} \bigr)\bigr)\Bigr) \succ Z \\
        &&
   		+ \lambda c  \prec \Bigl( Y * \bigl( Y^{-1} \succ a  \prec Z^{-1} \bigr)*Z\Bigr)			
		  + \Bigl(\bigl(\bigl( Y - \un \bigr) * Y^{-1} \bigr)\succ \bigl(a  \prec Z^{-1} \bigr)\Bigr) * Z\nonumber\\
 &=& \Bigl( \lambda c  \prec \bigl( Y * \bigl( Y^{-1} \succ a  \prec Z^{-1} \bigr)\bigr)\Bigr) \succ Z
   	+ \lambda c  \prec \Bigl( Y * \bigl( Y^{-1} \succ a  \prec Z^{-1} \bigr)*Z\Bigr) 
	 + \bigl(a  \prec Z^{-1} \bigr) * Z \nonumber\\
 &=&  a + \lambda c  \prec \Bigl( Y * \bigl( Y^{-1} \succ a  \prec Z^{-1} \bigr)*Z\Bigr) 	+ \Bigl( a  \prec Z^{-1} 
	+ \bigl( \lambda c  \prec  Y\bigr)  \prec  \bigl( Y^{-1} \succ a  \prec Z^{-1} \bigr)\Bigr) \succ Z\\
 &=&  a + \lambda c  \prec \Bigl( Y * \bigl( Y^{-1} \succ a  \prec Z^{-1} \bigr)*Z\Bigr)  \\
	&& \hspace{5cm}
 	+ \Bigl( a  \prec Z^{-1}
	+ \bigl( \lambda c  \prec  Y\bigr)  \prec  \bigl( Y^{-1} \succ a  \prec Z^{-1} \bigr)\Bigr) \succ ( \lambda Z \succ b)\\       
&=&  a + \lambda c  \prec \Bigl( Y * \bigl( Y^{-1} \succ a  \prec Z^{-1} \bigr)*Z\Bigr)  \\
	&& \hspace{5cm}
	    + \lambda \Bigl( \bigl(a  \prec Z^{-1} \bigr)*Z           + \Bigl( \bigl( \lambda c  \prec  Y\bigr)  \prec  \bigl( Y^{-1} \succ a  \prec Z^{-1} \bigr)\Bigr)*Z\Bigr)  \succ b\\      
&=&  a + \lambda c  \prec \Bigl( Y * \bigl( Y^{-1} \succ a  \prec Z^{-1} \bigr)*Z\Bigr) 
		+ \lambda \Bigl( \bigl(a  \prec Z^{-1} \bigr)*Z  
          	+ \Bigl( Y \prec  \bigl( Y^{-1} \succ a  \prec Z^{-1} \bigr)\Bigr)*Z\Bigr)  \succ b\\          
&=&  a + \lambda c  \prec \Bigl( Y * \bigl( Y^{-1} \succ a  \prec Z^{-1} \bigr)*Z\Bigr) 
	+ \lambda  \Bigl( \bigl(a  \prec Z^{-1} \bigr)*Z  
	+ \Bigl( Y * \bigl( Y^{-1} \succ a  \prec Z^{-1} \bigr)\Bigr)*Z\Bigr)  \succ b\\   
&  & \hspace{5cm}    - \lambda \Bigl( \bigl( (Y * Y^{-1}) \succ (a  \prec Z^{-1}) \bigr)*Z\Bigr)  \succ b\\ 
&=&  a + \lambda c  \prec \Bigl( Y * \bigl( Y^{-1} \succ a  \prec Z^{-1} \bigr)*Z\Bigr) +
          \lambda \Bigl( Y * \bigl( Y^{-1} \succ a  \prec Z^{-1} \bigr)*Z\Bigr)  \succ b\\
&=& a + \lambda c \prec X +  \lambda X \succ b      \nonumber
\end{eqnarray*}}
\end{proof}

\begin{rmk}{\rm{
Observe that for $b,c \in A$, when using $(x \succ y) \prec z = x \succ (y \prec z)$, which implies  $ \un \succ a \prec \un = a$ for $a \in A$, we find:
\begin{eqnarray*}
	X &=& Y * \bigl( Y^{-1} \succ a  \prec Z^{-1} \bigr) * Z\\ 
	    &=& Y * \bigl( a  - \lambda (Y^{-1} \succ c) \succ a - \lambda a  \prec (b\prec Z^{-1})
                   +  \lambda^2 (Y^{-1} \succ c) \succ a  \prec (b\prec Z^{-1}) \bigr) * Z.
\end{eqnarray*}
This last expression reduces to $X= Y*Z$, the solution of (\ref{eq:degree1relation}), for $a = \un$. It is tempting in view of this observation to set $\un\succ\un\prec\un=\un$, but it would easily lead to some contradictions. Indeed, if we were able to define $\un\prec\un$ or $\un\succ\un$ in a way compatible with the dendriform axioms, an obvious computation leads
to $\un\succ\un\prec\un=0$! It is then safer to get rid of all expressions involving the undefined $\un\prec\un$ or $\un\succ\un$ in the calculations.
}}\end{rmk}

\begin{cor}\label{cor:pre-LieIVP} Let $\overline{A}$ be a unital dendriform algebra and $a,b \in A$.  Let $Z=Z(b)$ be a solution to $ Z= \un + \lambda  Z \succ b$. The solution to the ``left'' pre-Lie equation in  $\overline{A}$:
\begin{equation}
\label{eq:pre-Lie}
     X = a +   \lambda X \rhd b,
\end{equation}
is given by:
\begin{equation}
\label{eq:solPre-Lie}
	X = Z^{-1} * \bigl( Z \succ a  \prec Z^{-1} \bigr) * Z.
\end{equation}
\end{cor}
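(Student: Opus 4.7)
The plan is to reduce the corollary directly to Theorem \ref{thm:dendsolution1}. First I would unfold the definition of the left pre-Lie product, $X \rhd b = X \succ b - b \prec X$, so that the equation $X = a + \lambda X \rhd b$ is rewritten as
\begin{equation*}
X = a + \lambda X \succ b - \lambda b \prec X,
\end{equation*}
which is precisely the degree $(1,1)$ equation \eqref{eq:degree1relation} with $c := -b$. Theorem \ref{thm:dendsolution1} then immediately provides the solution
\begin{equation*}
X = Y(-b) * \bigl(Y(-b)^{-1} \succ a \prec Z(b)^{-1}\bigr) * Z(b),
\end{equation*}
where $Y(-b)$ is the solution of $Y = \un + \lambda(-b) \prec Y = \un - \lambda b \prec Y$.

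Next I would identify $Y(-b)$ with $Z^{-1}$. This is exactly the content of the discussion around equation \eqref{eq:relationYZinverse}: the element $\tilde Z = Z^{-1}$, characterized via the antipode of the Hopf algebra $H$ as $\tilde Z = \sum_{n \ge 0}(-\lambda)^n w_\succ^{(n)}(b)$, satisfies $\tilde Z = \un - \lambda b \prec \tilde Z$. Since this defining recursion coincides with the one satisfied by $Y(-b)$, and such a fixed-point equation in $\overline A[[\lambda]]$ has a unique solution (comparing coefficients of $\lambda^n$ order by order), we conclude $Y(-b) = Z^{-1}$, and in particular $Y(-b)^{-1} = Z$.

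Substituting these identifications into the formula from Theorem \ref{thm:dendsolution1} yields
\begin{equation*}
X = Z^{-1} * \bigl(Z \succ a \prec Z^{-1}\bigr) * Z,
\end{equation*}
which is exactly \eqref{eq:solPre-Lie}. I do not expect a genuine obstacle here: the only subtlety is the need to work with $a \in A$ (rather than $a = \un$) so that the expression $Z \succ a \prec Z^{-1}$ is well-defined under the rules \eqref{unit-dend}, avoiding the forbidden combinations $\un \succ \un$ and $\un \prec \un$; this is already guaranteed by the hypothesis $a \in A$ in Theorem \ref{thm:dendsolution1}, so no separate argument is required.
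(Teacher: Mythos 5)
Your proposal is correct and is exactly the paper's (implicit) argument: the corollary is stated as an immediate consequence of Theorem \ref{thm:dendsolution1} via the observation that $c=-b$ turns the degree $(1,1)$ equation into the pre-Lie one, together with the identification $Y(-b)=Z^{-1}$ coming from the discussion of \eqref{eq:relationYZinverse}. One immaterial slip: the explicit series for $\tilde Z=Z^{-1}$ should read $\sum_{n\ge 0}(-\lambda)^n w_\prec^{(n)}(b)$ rather than $w_\succ^{(n)}(b)$, but your argument only uses the recursion $\tilde Z=\un-\lambda b\prec\tilde Z$ and order-by-order uniqueness, so nothing is affected.
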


Recall that the solution $Z(b)$ of the equation of degree $(1,0)$, $Z= \un + \lambda  Z \succ b$ is a group-like element with respect to the coproduct $\Delta$, see  (\ref{Hopf}). We may therefore write the above solution (\ref{eq:solPre-Lie}) as:
$$
	X=Ad^{*}_{Z^{-1}}( Z \succ a  \prec Z^{-1} ),
$$
where as usual $Ad^{*}_{F}(G):=F*G*F^{-1}$. Due to the second dendriform axiom (\ref{A2}) we may interpret the term 
$Z \succ a  \prec Z^{-1}$ as a second group action, which we denote by:
\begin{equation}
\label{2ndgroupact}
	\Theta_Z(a):=Z \succ a  \prec Z^{-1}.
\end{equation}

\begin{lem} \label{lem:2ndgroupact} Let $\overline{A}$ be a unital dendriform algebra and $a,b \in A$. Let $A,B$ be two solutions of the equation of degree $(1,0)$.
\begin{enumerate}

\item $\Theta_A \circ \Theta_B (x) = \Theta_{A*B}(x)$

\item $\Theta_A \circ \Theta_{A^{-1}} (x) = \Theta_{A^{-1}} \circ \Theta_{A} (x)=x$.

\end{enumerate}

\end{lem}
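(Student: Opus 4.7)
The plan is to prove (1) by a direct computation that simply chains the three dendriform axioms \eqref{A1}, \eqref{A2}, \eqref{A3} in order, and then to derive (2) as an immediate specialization.

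For (1), starting from the definition \eqref{2ndgroupact},
\[
\Theta_A\bigl(\Theta_B(x)\bigr)=A\succ\bigl((B\succ x)\prec B^{-1}\bigr)\prec A^{-1},
\]
I would first apply \eqref{A2} to pull the inner $\prec B^{-1}$ outside of the outer $A\succ$, giving $\bigl(A\succ(B\succ x)\bigr)\prec B^{-1}\prec A^{-1}$. Then I would use \eqref{A1} on the two stacked right-products to combine them into $\bigl(A\succ(B\succ x)\bigr)\prec(B^{-1}*A^{-1})$, noting that $B^{-1}*A^{-1}=(A*B)^{-1}$ by associativity of $*$. Finally, I would use \eqref{A3} to merge the two nested left-products as $A\succ(B\succ x)=(A*B)\succ x$, delivering $\bigl((A*B)\succ x\bigr)\prec(A*B)^{-1}=\Theta_{A*B}(x)$.

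For (2), I would just specialize $B=A^{-1}$ in part (1). Since $A*A^{-1}=A^{-1}*A=\un$, this reduces both compositions to $\Theta_{\un}(x)=\un\succ x\prec\un$, which equals $x$ by \eqref{unit-dend} applied sequentially: first $\un\succ x=x$, then $x\prec\un=x$.

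The main technical nuisance is the careful handling of the unit $\un$, because $\un\prec\un$ and $\un\succ\un$ are undefined. To keep the axiom invocations legitimate I would decompose $A=\un+A'$, $B=\un+B'$ with $A',B'\in A[[\lambda]]$ and verify that at every stage the element on which $A\succ\cdot$ or $\cdot\prec A^{-1}$ is applied already lies in $A[[\lambda]]$ (this is automatic once $x\in A$, since $B\succ x=x+B'\succ x$ has no pure unit term). In particular, in part (2) one must be vigilant never to collapse $\un\succ x\prec\un$ into a single undefined symbol, but only to evaluate the two unit absorptions one after the other. A secondary point worth flagging is that the calculation in (1) really only uses invertibility of $A*B$ with respect to $*$, so applying (1) with $B=A^{-1}$ in (2) is justified even though $A^{-1}$ strictly speaking solves the degree $(0,1)$ equation \eqref{eq:relationYZinverse} rather than a degree $(1,0)$ one.
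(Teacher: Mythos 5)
Your proof is correct and follows essentially the same route as the paper, which simply asserts that the identity $A\succ(B\succ x\prec B^{-1})\prec A^{-1}=(A*B)\succ x\prec(A*B)^{-1}$ ``follows immediately from the dendriform axioms'' and deduces (2) from (1); you merely spell out the axiom chain \eqref{A2}, \eqref{A1}, \eqref{A3} explicitly. Your extra care about the unit $\un$ and about $A^{-1}$ satisfying the degree $(0,1)$-type equation \eqref{eq:relationYZinverse} is sensible but does not change the argument.
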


\begin{proof} 
The second property follows from the first. Let us verify the first identity:
  \allowdisplaybreaks{
\begin{eqnarray*}
	\Theta_A \circ \Theta_B (x) &=& A \succ (B \succ x  \prec B^{-1})  \prec A^{-1}\\
							  &=& (A*B) \succ x  \prec (A*B)^{-1} = \Theta_{A*B}(x). 
\end{eqnarray*}}
This follows immediately from the dendriform axioms.
\end{proof}

We remark that the two group actions in general do not commute, i.e. $Ad^*_A \circ \Theta_B \neq \Theta_B \circ Ad^*_A$.

Recall that the Dynkin operator is the linear endomorphism of the tensor algebra $T(X)$ over an alphabet $X=\{x_1,\ldots
,x_n,\ldots\}$ into itself the action of which on words $y_1 \ldots y_n,\ y_i\in X$ is given by the left-to-right iteration of
the associated Lie bracket:
$$
    D(y_1,\dots,y_n) = [\cdots[[y_1, y_2], y_3]\cdots\!, y_n],
$$
where $[x,y]:=xy-yx$. The Dynkin operator is a quasi-idempotent: its action on a homogeneous element of degree~$n$ satisfies $D^2=nD$. The associated projector $D/n$ sends $T_n(X)$, the component of degree~$n$ of the tensor algebra, to the component of degree~$n$ of the free Lie algebra over $X$. The tensor algebra is a graded connected cocommutative Hopf algebra, and it is natural to extend the definition of $D$ to any such Hopf algebra as the convolution product of the antipode $S$ with the grading operator $N$: $D:=S \star N$ \cite{patreu2002, eunomia2006, EGP07, EMP07}. This applies in particular in the dendriform context to the Hopf algebra $H$ introduced above. We will write $D_n$ for $D\circ p_n$, where $p_n$ is the canonical projection from $T(X)$ (resp. $H$) to $T_n(X)$ (resp. $H_n$).

\begin{lem} \cite{EMP07} \label{lem:dynkin-prelie}
For any integer $n\ge 1$ and for any $x \in A$ we have:
\begin{equation}\label{eq:dynkin-prelie}
    D(w_{\succ}^{(n)}(x)) = {\ell}^{(n)}(x).
\end{equation}
\end{lem}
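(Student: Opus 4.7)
My plan is to prove the identity by induction on $n$, with the base case $n=1$ being immediate: $w_\succ^{(1)}(x)=x$ is primitive in $H$, so the convolution $D=S\star N$ gives $D(x)=S(\un)*N(x)=x=\ell^{(1)}(x)$.

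For the inductive step, the central claim I would establish is the recursion
$$ D\bigl(w_\succ^{(n)}(x)\bigr) = D\bigl(w_\succ^{(n-1)}(x)\bigr) \rhd x \qquad (n \ge 2). $$
Combined with the inductive hypothesis and the defining recursion $\ell^{(n)}(x)=\ell^{(n-1)}(x)\rhd x$ from (\ref{def:pre-LieWords}), this immediately gives $D(w_\succ^{(n)})=\ell^{(n)}$. To prove the recursion, I would unfold both sides using the explicit coproduct (\ref{Hopf}) and the antipode formula $S(w_\succ^{(m)})=(-1)^mw_\prec^{(m)}$, yielding
$$ D(w_\succ^{(n)}) = n\,w_\succ^{(n)} + \sum_{m=1}^{n-1}(-1)^m(n-m)\, w_\prec^{(m)} * w_\succ^{(n-m)}, $$
while axiom (\ref{A3}) simplifies $(w_\prec^{(m)}*w_\succ^{(n-1-m)})\succ x = w_\prec^{(m)}\succ w_\succ^{(n-m)}$ and axiom (\ref{A1}) simplifies $x\prec(w_\prec^{(m)}*w_\succ^{(n-1-m)}) = w_\prec^{(m+1)}\prec w_\succ^{(n-1-m)}$. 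Matching coefficients, the $\prec$-type contributions cancel exactly, while the $\succ$-type contributions reduce the desired identity to the single vanishing
$$ \sum_{m=0}^{n-1} (-1)^m w_\prec^{(m)} \succ w_\succ^{(n-m)} = 0 \qquad (n \ge 2), $$
under the boundary convention $w_\prec^{(0)}\succ y := y$.

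The main technical obstacle is this last identity, and my plan is to verify it at the level of generating series. Let $Z:=\sum_{n\ge 0}\lambda^n w_\succ^{(n)}(x)$, so that $Z=\un+\lambda Z\succ x$ and $Z^{-1}=\sum_{n\ge 0}(-1)^n\lambda^n w_\prec^{(n)}(x)$ by (\ref{devt}) and (\ref{eq:relationYZinverse}). Then the alternating sum above is exactly the degree-$n$ coefficient of $Z^{-1}\succ(Z-\un)$. Substituting $Z-\un=\lambda Z\succ x$ and invoking axiom (\ref{A3}) in the form $f\succ(g\succ h)=(f*g)\succ h$ yields
$$ Z^{-1}\succ(Z-\un) = \lambda\, Z^{-1}\succ(Z\succ x) = \lambda\,(Z^{-1}*Z)\succ x = \lambda\,\un\succ x = \lambda x, $$
whose $\lambda^n$-coefficient vanishes for all $n\ge 2$, as required. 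Throughout, care must be taken with the unit conventions (\ref{unit-dend}) so as to avoid the undefined expressions $\un\succ\un$ and $\un\prec\un$; writing $Z-\un$ in the $\succ$-slot above is precisely what keeps all intermediate $\succ$-operations well-defined.
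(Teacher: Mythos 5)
Your argument is correct. Note that the paper itself offers no proof of this lemma---it is imported verbatim from \cite{EMP07}---so there is no internal proof to compare against; what you have written is a legitimate self-contained derivation. The computation of $D=S\star N$ on $w_\succ^{(n)}$ from the coproduct \eqref{Hopf} and the antipode formula is right, the reindexing that makes the $\prec$-contributions on both sides of your recursion match is right, and you have correctly isolated the real content of the inductive step, namely the vanishing of $\sum_{m=0}^{n-1}(-1)^m w_\prec^{(m)}\succ w_\succ^{(n-m)}$ for $n\ge 2$. It is worth emphasizing that this is strictly stronger than the antipode identity $\sum_{m=0}^{n}(-1)^m w_\prec^{(m)}* w_\succ^{(n-m)}=0$, which only controls the sum of the $\prec$- and $\succ$-halves; so the vanishing genuinely requires dendriform input, and your generating-series verification $Z^{-1}\succ(Z-\un)=\lambda\, Z^{-1}\succ(Z\succ x)=\lambda\,(Z^{-1}*Z)\succ x=\lambda x$ supplies exactly that input via axiom \eqref{A3}, the fixed-point equation for $Z$ from \eqref{devt}, and the identification $Z^{-1}=\sum_{n\ge0}(-\lambda)^nw_\prec^{(n)}(x)$ established after \eqref{eq:relationYZinverse}. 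Your care with the unit conventions \eqref{unit-dend} (keeping $Z-\un$, rather than $Z$, in the right slot of $\succ$) is also appropriate; the only silently used facts are $N(\un)=0$ in the base case and $w_\prec^{(1)}=x$ in the boundary term of the $\prec$-cancellation, both harmless.
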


Let us come back to equation~(\ref{eq:degree1relation}) respectively Corollary~\ref{cor:pre-LieIVP}. Then, using $D(\un)=0$, we see immediately that the solution $Z=Z(a)$ to the equation $ Z= \un + \lambda  Z \succ a$ is mapped to the solution 
$$
	X := \lambda^{-1}D(Z) = Ad^*_{Z^{-1}}\circ \Theta_Z(a) =Z^{-1} * \bigl( Z \succ a  \prec Z^{-1} \bigr) * Z 
$$ 
of the corresponding pre-Lie equation~(\ref{eq:pre-Lierelation}) for $a=b=-c \in A$:
\begin{equation*}
	X = a + \lambda X \rhd a. 	
\end{equation*}


\section{The pre-Lie Magnus expansion}
\label{sect:preLieMag}

In this section we recall results from \cite{EM} where we obtained a recursive formula for the logarithm of the solutions of the two dendriform equations $X = \un + \lambda a \prec X$ and $Y = \un - \lambda Y \succ a$ in $A[[\lambda]]$, $a \in A$, of degree $(0,1),\ (1,0)$, respectively. 

Let us introduce the following operators in $(\overline A,\prec,\succ)$, where $a$ is any element of $A$:
 \allowdisplaybreaks{
\begin{eqnarray*}
    L_\prec[a](b)&:=& a \prec b \hskip 8mm L_\succ[a](b):= a\succ b \hskip 8mm
    R_\prec[a](b):= b \prec a \hskip 8mm R_\succ[a](b):= b\succ a\\
    L_\lhd[a](b)&:=& a \lhd b \hskip 8mm L_\rhd[a](b) := a \rhd b \hskip 8mm
    R_\lhd[a](b):=b \lhd a \hskip 8mm R_\rhd[a](b):= b \rhd a
\end{eqnarray*}}

In the following theorem we find a recursive formula for the logarithm of the solutions of the equations in (\ref{eq:relationYZ}). We will call it the {\sl{pre-Lie Magnus}} expansion for reasons which will become clear in Remark \ref{rmk:pre-LieMagnus}.  

\begin{thm} \cite{EM} \label{thm:mainMAGNUS} {\rm{(pre-Lie Magnus expansion)}}
Let $\Omega':=\Omega'(\lambda a)$, $a \in A$, be the element of $\lambda \overline{A}[[\lambda]]$ such that $Y=\exp^*(\Omega')$ and $Y^{-1}=\exp^*(-\Omega')$, where $Y$ and $Y^{-1}$ are the solutions of the two equations $Y = \un + \lambda a  \prec Y $ and $Y^{-1} = \un - \lambda Y^{-1} \succ a$ of degree $(0,1),\ (1,0)$, respectively. This element obeys the following recursive equation:
 \allowdisplaybreaks{
\begin{eqnarray}
    \Omega'(\lambda a) &=& \frac{R_\lhd[\Omega']}{1-\exp(-R_\lhd[\Omega'])}(\lambda a)=
                \sum_{m \ge 0}(-1)^m \frac{B_m}{m!}R^{(m)}_\lhd[\Omega'](\lambda a),\label{main1}
\end{eqnarray}}
or alternatively:
 \allowdisplaybreaks{
\begin{eqnarray}
    \Omega'(\lambda a) &=& \frac{L_\rhd[\Omega']}{\exp(L_\rhd[\Omega'])-1}(\lambda a)
            =\sum_{m\ge 0}\frac{B_m}{m!}L^{(m)}_\rhd[\Omega'](\lambda a),\label{main3}
\end{eqnarray}}
where the $B_l$'s are the Bernoulli numbers.
\end{thm}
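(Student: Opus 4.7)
The approach mimics Magnus's classical derivation, but carried out inside the pre-Lie algebra $(\overline A,\rhd)$ attached to the dendriform structure: the role of the Lie bracket $\mathrm{ad}_\Omega=L_\Omega-R_\Omega$ is played here by the single left pre-Lie operator $L_\rhd[\Omega']$, reflecting the one-sided nature of the defining equation $Y=\un+\lambda a\prec Y$.

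First I would verify that \eqref{main3} determines $\Omega'=\sum_{n\ge 1}\lambda^n\omega_n\in\lambda\overline A[[\lambda]]$ uniquely: at order $\lambda^n$ the summand $\frac{B_m}{m!}L_\rhd^{(m)}[\Omega'](\lambda a)$ contains $m+1$ letters from $\{\Omega',\lambda a\}$, so its $\lambda^n$-coefficient depends only on $\omega_1,\ldots,\omega_{n-1}$, giving a recursive definition with $\omega_1=a$. The equivalence of \eqref{main1} and \eqref{main3} is then immediate: the pre-Lie symmetry $\Omega'\rhd b=-b\lhd\Omega'$ yields $R_\lhd[\Omega']=-L_\rhd[\Omega']$ as linear operators on $\overline A$, so $R_\lhd^{(m)}[\Omega']=(-1)^m L_\rhd^{(m)}[\Omega']$; the factor $(-1)^m$ in front of $B_m$ in \eqref{main1} therefore cancels and one recovers \eqref{main3} term by term.

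The main content is then to show that $\Omega':=\log^*(Y)$, with $Y=\sum_{n\ge 0}\lambda^n w_\prec^{(n)}(a)$, satisfies the recursion \eqref{main3}. My plan is to match coefficients of $\lambda^n$ on both sides by direct calculation: expand $\log^*(Y)=\sum_{k\ge 1}\frac{(-1)^{k+1}}{k}(Y-\un)^{*k}$, use the dendriform axioms \eqref{A1} and \eqref{A3} to rewrite each $(Y-\un)^{*k}$ as a sum of iterated $\prec$-products in $a$, and then reorganise these into left-iterated pre-Lie products in $\lambda a$ via the pre-Lie identity \eqref{prelie1}. As a sanity check at order $\lambda^2$: $\omega_2=a\prec a-\frac{1}{2}\,a*a=\frac{1}{2}(a\prec a-a\succ a)=-\frac{1}{2}\,a\rhd a$, matching the only contributing term $\frac{B_1}{1!}L_\rhd[\Omega'](\lambda a)$ in \eqref{main3} with $B_1=-\frac{1}{2}$.

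The principal obstacle is the combinatorial identity converting iterated $*$-powers of $Y-\un$ into pre-Lie monomials with Bernoulli-number coefficients. It is controlled by the pre-Lie relation \eqref{prelie1}, which permits the required rearrangements of nested $\rhd$-brackets, and is ultimately a manifestation of the Guin--Oudom--Chapoton--Livernet isomorphism identifying the universal enveloping algebra of the Lie algebra underlying a pre-Lie algebra with the symmetric algebra over that pre-Lie algebra; its concrete realisation in the dendriform setting is furnished by axioms \eqref{A1}--\eqref{A3} (see \cite{CL}). An explicit inductive proof extracting Bernoulli coefficients from the generating function $x/(e^x-1)$ would be feasible but very tedious, so the cleanest route is to invoke this operadic isomorphism and then identify the resulting series with the pre-Lie Magnus expansion.
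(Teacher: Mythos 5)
Your preliminary steps are sound: the recursion \eqref{main3} is indeed well defined order by order, the passage between \eqref{main1} and \eqref{main3} via $R_\lhd[\Omega']=-L_\rhd[\Omega']$ is exactly right, and the order-$\lambda^2$ check is correct. Note, for calibration, that the paper itself offers no proof of this theorem: it is recalled from \cite{EM}, where the argument rests on a specific dendriform identity rather than on the route you sketch.

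The gap is in your final paragraph: the ``combinatorial identity converting iterated $*$-powers of $Y-\un$ into pre-Lie monomials with Bernoulli-number coefficients'' is the entire content of the theorem, and neither the pre-Lie relation \eqref{prelie1} nor an appeal to the Guin--Oudom/Chapoton--Livernet isomorphism delivers it as stated. That isomorphism identifies the enveloping algebra of the Lie algebra underlying a pre-Lie algebra with its symmetric coalgebra equipped with the Grossman--Larson product; to use it here you would first have to prove that the dendriform $*$-product, restricted to the relevant subalgebra generated by the elements $w^{(n)}_\prec(a)$, realises that product --- a nontrivial assertion which you do not supply --- and even then the Bernoulli numbers must still be extracted. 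The proof in \cite{EM} instead isolates a dendriform analogue of the derivative of the exponential map: for any $x\in\lambda\overline A[[\lambda]]$,
$$
 \exp^*(x)-\un \;=\; \Bigl(\frac{\exp(L_\rhd[x])-1}{L_\rhd[x]}(x)\Bigr)\prec\exp^*(x),
$$
proved by induction on the order in $\lambda$ from the axioms \eqref{A1}--\eqref{A3}. Comparing this, for $x=\Omega'$, with $\exp^*(\Omega')-\un=\lambda a\prec\exp^*(\Omega')$ and cancelling the invertible right factor gives $\lambda a=\frac{\exp(L_\rhd[\Omega'])-1}{L_\rhd[\Omega']}(\Omega')$, whence \eqref{main3} by inverting the formal series in $L_\rhd[\Omega']$ --- this inversion is precisely where $x/(e^x-1)$, and hence the Bernoulli numbers, enter. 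Without this lemma (or an equivalent substitute) your plan of ``reorganising'' $\log^*(Y)$ term by term has no mechanism for producing the coefficients $B_m/m!$, so the central step remains unproved.
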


Returning to Theorem~\ref{thm:dendsolution1} we see that the solution of equation  (\ref{eq:degree1relation}) writes:
$$
	X= \exp^*\bigl(\Omega'(c)\bigr)*\bigl(\exp^*\bigl(-\Omega'(c)\bigr)
	\succ a \prec 
	\exp^*\bigl(\Omega'(-b)\bigr)\bigr)*\exp^*\bigl(-\Omega'(-b)\bigr)
$$
and the solution to the pre-Lie recursion, i.e. equation~(\ref{eq:degree1relation}) for $c=-b$:
$$
	X = Ad^*_{\exp^*(-\Omega'(-b))} \circ \Theta_{\exp^*(\Omega'(-b))}(a). 
$$

\begin{rmk}\label{rmk:pre-LieMagnus}{\rm{ (classical Magnus expansion)
Let $\mathcal F$ be any algebra of operator-valued functions on the real line, e.g. smooth $n \times n$ matrix-valued functions, closed under integrals $I(U)(x):=\int_0^x U(s) ds$. Then, $\mathcal F$ is a dendriform algebra for the operations:
$$
    A \prec B (x):=A(x)\cdot \int\limits_0^xB(t)dt
 \qquad
    A \succ B (x):=\int\limits_0^xA(t)dt\cdot B(x)
$$
with $A,B \in \mathcal F$. This is a particular example of a dendriform structure arising from a Rota--Baxter algebra structure --of weight zero \cite{Aguiar00}. We refer to Section \ref{sect:appl} for further details on Rota--Baxter algebras and their connections to dendriform algebras. Here, let us simply mention that the Rota--Baxter operator on $\mathcal F$ giving rise to the dendriform structure is the Riemann integral map: $I(A)(x):=\int_0^xA(t)dt$. In this particular example we see that the pre-Lie Magnus expansion (\ref{main1}) with the pre-Lie product $L_{\rhd}[A](B)=(A\rhd B)(t)=[I(A),B](t)$:  
\begin{equation}
\label{eq:clMagnus}
	\Omega'(A)(s)=\dot \Omega(s) = \sum_{m\ge 0}\frac{B_m}{m!}ad_{\Omega(s)}^{(m)}(A(s)),  
\end{equation}
coincides with Magnus' logarithm \cite{Magnus}, $\Omega(t)=\log(Y(t))$, of the solution of the initial value problem $\dot{Y}(t)=A(t)Y(t)$, $Y(0)=Y_0=1$, such that:
$$
    Y(t) = \exp\Bigl( \int^t_0 \dot{\Omega}(s)\, ds \Bigr)
         = \sum_{n \ge 0} \frac{\Omega(t)^n}{n!},\qquad \Omega(0)=0,
$$
for more details on Magnus' work see also \cite{CasasIserles06,Gelfand,IsNo99,KlOt,MielPleb,Strichartz,Wilcox,Zanna}. By now we hope the above used prime notation, $\Omega'$, in equation (\ref{main1}) and (\ref{main3}) of Theorem \ref{thm:mainMAGNUS} has become self-evident. Notice that if we suppose the algebra $\mathcal F$ to be unital, the unit (which we denote by $1$) has nothing to do with the added unit $\un$ of the underlying dendriform algebra. In fact, we extend the Rota--Baxter algebra structure to $\overline {\mathcal{F}}$ by setting:
\begin{equation}
\label{unite}
    R(\un):=1,\hskip 8mm \wt R(\un):=-1 \hskip 5mm \hbox{ and
    }\un.x=x.\un=0\
    \hbox{ for any } x\in\overline {\mathcal{F}}.
\end{equation}
This is consistent with the axioms (\ref{unit-dend}) which in particular yield $\un\succ x=R(\un)x$ and $x\prec \un=-x\wt R(\un)$. Later, in Section \ref{sect:appl} we will provide more details in the context of applications.}}
\end{rmk}

Returning to the two group actions, we describe the infinitesimal actions. Let $\overline{A}$ be a unital dendriform algebra. Let $Y=\exp^*(\Omega'(\lambda a))$, $a \in A$, be the solution of the equation $Y = \un + \lambda a \prec Y$  of degree $(0,1)$ in $\overline{A}[[\lambda]]$.
\begin{enumerate}

\item $\Theta_Y (x) = Y \succ x \prec Y^{-1} \sim x + \lambda a \rhd x + O(\lambda^2)$

\item $Ad^*_Y (x) = Y * x * Y^{-1} \sim x + \lambda [a,x] + O(\lambda^2)$.

\end{enumerate}


\section{The pre-Lie Fer expansion}
\label{sect:preLieFer}

Following Fer's original work~\cite{Fer}, see also \cite{Iserles84,Iserles00,Wilcox}, we now make a simple Ansatz for the solution of the degree $(0,1)$ equation $Y=\un + \lambda a \prec Y$ :
$$
    Y=\exp^*(\lambda a)*V_1
$$
and return this into the recursion. This then leads to what we named the {\sl{pre-Lie Fer}} expansion.

\begin{thm} \cite{EM} \label{thm:mainFer} {\rm{(pre-Lie Fer expansion)}}
Let $(\overline A,\prec,\succ)$ be a dendriform algebra augmented by a unit $\un$~(\ref{unit-dend}). Let $U'_0:=\lambda a$,
$U'_n:=U'_n(a)$, $n \in \mathbb{N}$, $a \in A$, be elements in $\lambda \overline{A}[[\lambda]]$, such that
$$
    Y = \overrightarrow{\prod\limits_{n \ge 0}}^* \exp^*(U'_n)
    \qquad\
    Y^{-1} = \overleftarrow{\prod\limits_{n \ge 0}}^* \exp^*(-U'_n)
$$
where $Y$ and $Y^{-1}$ are solutions of $Y = \un + \lambda a  \prec Y $ and $Y^{-1} = \un - \lambda Y^{-1} \succ a$, respectively. Then these elements $U'_n$ obey the following recursive equation:
 \allowdisplaybreaks{
\begin{eqnarray*}
    U'_{n+1}:= \sum_{l>0} \frac{(-1)^l l}{(l+1)!} L_\rhd[U'_n]^{(l)}(U'_n),\quad\ n \ge 0.
\end{eqnarray*}}
\end{thm}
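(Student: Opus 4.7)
The plan is to prove the theorem by induction on $n$, reducing the whole statement to a single key lemma that is applied iteratively.

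\emph{Key lemma.} If $V \in \overline A[[\lambda]]$ satisfies $V = \un + w \prec V$ for some $w \in \lambda A[[\lambda]]$, then $W := \exp^*(-w) * V$ satisfies an equation of the same form, namely $W = \un + w' \prec W$, with
$$
  w' \;=\; \sum_{l > 0} \frac{(-1)^l\, l}{(l+1)!}\, L_\rhd[w]^{(l)}(w).
$$
Granting this, the theorem follows as follows. Set $V_0 := Y$, and inductively define $V_{n+1} := \exp^*(-U'_n) * V_n$, with $U'_{n+1}$ given by the claimed recursion. The lemma then yields $V_n = \un + U'_n \prec V_n$ for every $n$; since the leading term of $U'_{n+1}$ is $-\tfrac12\, U'_n \rhd U'_n$, a straightforward induction shows that $U'_n$ has $\lambda$-valuation at least $2^n$. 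Consequently the rightward infinite product $\exp^*(U'_0) * \exp^*(U'_1) * \exp^*(U'_2) * \cdots$ converges in $\overline A[[\lambda]]$; writing $Y = \exp^*(U'_0) * \exp^*(U'_1) * \cdots * \exp^*(U'_{N-1}) * V_N$ and letting $N \to \infty$ (so that $V_N \to \un$) produces the stated factorization of $Y$. The analogous identity for $Y^{-1}$ is obtained by inverting each exponential factor and reversing the order of multiplication.

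To prove the key lemma, I would combine Theorem \ref{thm:mainMAGNUS} with the Baker--Campbell--Hausdorff formula. By Magnus, $V = \exp^*(\Omega'(w))$, so $W = \exp^*(-w) * \exp^*(\Omega'(w)) = \exp^*(\BCH(-w, \Omega'(w)))$, where the BCH series is formed with the Lie bracket $[\![x,y]\!] = x * y - y * x = x \rhd y - y \rhd x$. Because the map $x \mapsto \Omega'(x)$ is a triangular bijection $\lambda A[[\lambda]] \to \lambda A[[\lambda]]$ (identity at leading order), there is a unique $w' \in \lambda^2 A[[\lambda]]$ with $\Omega'(w') = \BCH(-w, \Omega'(w))$, and Magnus applied in reverse shows that this $w'$ satisfies $W = \un + w' \prec W$.

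The combinatorial heart of the argument is then to match this $w'$ with the stated explicit sum. The coefficients $\tfrac{(-1)^l l}{(l+1)!}$ are those of the generating function
$$
 g(X) \;=\; e^{-X} + \frac{e^{-X} - 1}{X} \;=\; \frac{(X+1)\,e^{-X} - 1}{X},
$$
which is precisely the series appearing in the classical derivative-of-exponential derivation of Fer's expansion: if $\dot Z = w Z$ and $Z = e^{\int w}\, Z_1$, then $\dot Z_1 = g(\mathrm{ad}_{\int w})(w)\, Z_1$. Under the dictionary $L_\rhd[w] \longleftrightarrow \mathrm{ad}_{\int w}$ valid in the Riemann-integral Rota--Baxter realization of Remark \ref{rmk:pre-LieMagnus}, this gives exactly the claimed formula for $w'$. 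The main obstacle is to lift this classical computation to an arbitrary dendriform algebra; the cleanest way is to perform the expansion inside the free dendriform algebra on a single generator and to match coefficients of iterated $L_\rhd[w]$ acting on $w$ degree by degree, using the inverse pre-Lie Magnus identity $w = \sum_{k \ge 0} \tfrac{1}{(k+1)!}\, L_\rhd[\Omega'(w)]^{(k)}(\Omega'(w))$ --- equivalent to the Magnus recursion \eqref{main3} --- to convert BCH brackets into the pre-Lie form.
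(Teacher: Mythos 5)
Your overall skeleton --- a key lemma asserting that $W:=\exp^*(-w)*V$ again satisfies an equation of type $(0,1)$ with new datum $w'=\sum_{l>0}\frac{(-1)^l l}{(l+1)!}L_\rhd[w]^{(l)}(w)$, iterated with the valuation-doubling estimate to get convergence of the ordered product --- is exactly the Ansatz the paper describes ($Y=\exp^*(\lambda a)*V_1$, substituted back into the recursion, as in \cite{EM}). The iteration, the bound $\mathrm{val}_\lambda(U'_n)\ge 2^n$, and the limit argument are all fine. The problem is that the key lemma itself, which is the entire content of the theorem, is not actually proved. Your route via Theorem \ref{thm:mainMAGNUS} and BCH only establishes that \emph{some} $w'$ exists with $W=\un+w'\prec W$ (namely the unique $w'$ with $\Omega'(w')=\BCH(-w,\Omega'(w))$, using triangularity of $\Omega'$); the identification of that $w'$ with the explicit Fer sum is precisely the assertion to be proved, and at that point you appeal to the classical computation $\dot Z_1=g(\mathrm{ad}_{\int w})(w)Z_1$ together with the dictionary $L_\rhd[w]\leftrightarrow \mathrm{ad}_{\int w}$. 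That computation relies on differentiating $e^{B(t)}$, i.e.\ on the presence of a derivation inverse to the Rota--Baxter map, which a general dendriform algebra does not have (the paper itself stresses that Rota--Baxter algebras need not come with a skewderivation). Validity of the identity in one analytic realization does not imply it in the free dendriform algebra on one generator unless you also prove that realization is faithful on the relevant graded components, which you do not address.

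The proposed repair --- ``match coefficients of iterated $L_\rhd[w]$ acting on $w$ degree by degree, using the inverse Magnus identity $w=\sum_{k\ge0}\frac{1}{(k+1)!}L_\rhd[\Omega'(w)]^{(k)}(\Omega'(w))$'' --- is not a proof as stated. The elements $L_\rhd[w]^{(l)}(w)$ and their rebracketings are not linearly independent (they satisfy the left pre-Lie relation \eqref{prelie1}, which the paper exploits in Section \ref{ssect:trees} precisely to reduce the number of terms), so ``matching coefficients'' is ill-posed without fixing a basis; and it is not evident that $\BCH(-w,\Omega'(w))$, written in the Lie bracket $[\![\cdot,\cdot]\!]$, can be brought into the form $\Omega'\bigl(g(L_\rhd[w])(w)\bigr)$ by any routine substitution. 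What is missing is the genuinely dendriform computation: substitute $Y=\exp^*(\lambda a)*V_1$ into $Y=\un+\lambda a\prec Y$ and use the dendriform axioms (together with identities of the type $\exp^*(x)\succ y\prec\exp^*(-x)=e^{L_\rhd[x]}(y)$ relating $*$-conjugation-like operations to $L_\rhd$) to isolate $V_1=\un+U'_1\prec V_1$ with the stated $U'_1$. That is how \cite{EM} proceeds, and it is the step your proposal defers rather than carries out.
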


The presentation given here in the context of dendriform algebras, using the corresponding pre-Lie products, reduces to Fer's classical expansion \cite{Fer}, see also \cite{MZ}, when working in the dendriform algebra introduced in Remark~\ref{rmk:pre-LieMagnus}. 

 
\section{An equation of degree $(2,0)$}
\label{sect:secondorder}

Let $D$ be a dendriform algebra and $\overline D$ its augmentation by a unit $\un$, see (\ref{unit-dend}). We are interested in solving the following equation of degree $(2,0)$ in $\overline D[[\lambda]]$: 
\begin{equation}
\label{eq:order2}
	X= d +\lambda X\succ c+\lambda^2(X\succ b)\succ a,
\end{equation}
for $a,b,c \in D$ and $d \in \overline D$. We introduce matrix dendriform
algebras, in which equation \eqref{eq:order2} will be transformed into a
system of order $(1,0)$ equations.


\subsection{Matrix dendriform algebras}
\label{ssect:matrixDend}

Let $(D,\prec,\succ)$ be a dendriform algebra. Then the space $\Cal M_n(D)$ of square $n\times n$-matrices with entries in $D$ is a dendriform algebra, with operations defined by:
\begin{equation*}
	(a\prec b)_{ij}:=\sum_{i=1}^na_{ik}\prec b_{kj},\hskip 12mm
	(a\succ b)_{ij}:=\sum_{i=1}^na_{ik}\succ b_{kj}.
\end{equation*}
We can indeed verify the first dendriform axiom:
 \allowdisplaybreaks{
\begin{eqnarray*}
	[(a\prec b)\prec c]_{ij} &=&\sum_{k=1}^n(a\prec b)_{ik}\prec c_{kj}\\
						&=&\sum_{k,l=1}^n(a_{il}\prec b_{lk})\prec c_{kj}\\
						&=&\sum_{k,l=1}^na_{il}\prec(b_{lk}*c_{kj})=\sum_{l=1}^na_{il}\prec(b* c)_{lj} =[a\prec(b*c)]_{ij},
\end{eqnarray*}}
hence $(a\prec b)\prec c=a\prec(b*c)$ in $\Cal M_n(D)$, and the two other axioms can be checked similarly. If $\overline D:=D \oplus k.\un$ (resp. $\overline{\Cal M_n(D)}:=\Cal M_n(D)\oplus k.\un_n$) is the dendriform algebra $D$ (resp. $\mathcal{M}_n(D)$) augmented with a unit, then we can identify the unit $\un_n$ with the diagonal matrix with dendriform units $\un$'s on the diagonal and $0$ elsewhere.


\subsection{Transformation into a dendriform system of degree $(1,0)$}
\label{ssect:system}

First, we will show how to transform equation (\ref{eq:order2}) for $d=\un$ into a ``first-order" equation in $\Cal M_2(D)[[\lambda]]$. Observe the size of the matrices, i.e. $2=1+(2-1)2/2$.

\begin{lem} \label{base}
Let $Z \in \overline{\Cal M_2(D)}[[\lambda]]$ be the solution of the ``matrix dendriform" equation of degree $(1,0)$:
\begin{equation}
\label{eq:mat2order1}
	Z = \un_2 + \lambda Z \succ M.
\end{equation}
Then for any $v=(v_1,v_2)\in k^2$ the solution in $(\overline D[[\lambda]])^2$ of the equation:
\begin{equation}
\label{eq:vect-order1}
	Y=v\un_2+\lambda Y\succ M,
\end{equation}
with $Y\succ M:=(Y_1\succ M_{11}+Y_2\succ M_{21},\,Y_1\succ M_{12}+Y_2\succ M_{22})$ is given by: 
$$
	vZ:=(v_1Z_{11}+v_2Z_{21},\,v_1Z_{12}+v_2Z_{22}).
$$
\end{lem}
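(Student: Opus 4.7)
The plan is a direct verification: set $Y := vZ = (v_1Z_{11}+v_2Z_{21},\,v_1Z_{12}+v_2Z_{22})$ and check that $Y$ obeys \eqref{eq:vect-order1}. The key observation, which I would state and prove as a short intermediate lemma, is that the row-vector action is compatible with the matrix dendriform product in the sense that
\begin{equation*}
	(vZ)\succ M = v(Z\succ M),
\end{equation*}
where on the left the operation is the one defined in the statement and on the right $v$ acts on a $2\times 2$ matrix by the same rule. This is pure bookkeeping: for each $i\in\{1,2\}$,
\begin{equation*}
	\bigl((vZ)\succ M\bigr)_i
	= \sum_k (vZ)_k \succ M_{ki}
	= \sum_{j,k} v_j\, Z_{jk}\succ M_{ki}
	= \sum_j v_j\,(Z\succ M)_{ji}
	= \bigl(v(Z\succ M)\bigr)_i.
\end{equation*}

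Given this identity, I would then apply the matrix equation \eqref{eq:mat2order1} coordinate-wise. Recall that $\un_2$ is diagonal with $\un$'s on the diagonal and $0$'s off, so $(v\un_2)=(v_1\un,v_2\un)$; in particular, $(vZ)\bigr|_{Z=\un_2}=v\un_2$. Multiplying \eqref{eq:mat2order1} on the left by $v$ yields
\begin{equation*}
	vZ = v\un_2 + \lambda\,v(Z\succ M) = v\un_2 + \lambda\,(vZ)\succ M,
\end{equation*}
which is exactly \eqref{eq:vect-order1} with $Y=vZ$.

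Finally, uniqueness of the solution of \eqref{eq:vect-order1} in $(\overline D[[\lambda]])^2$ is immediate: writing $Y=\sum_{n\ge 0}\lambda^nY^{(n)}$ with $Y^{(n)}\in \overline D^2$, the recursion forces $Y^{(0)}=v\un_2$ and $Y^{(n+1)}=Y^{(n)}\succ M$, which determines $Y$ uniquely. Hence $Y=vZ$ is \emph{the} solution. The calculation is entirely routine; the only point requiring a moment's care is keeping the conventions for $\un_2$ and the row/column indices consistent, so that the expansion in the intermediate identity above lines up correctly.
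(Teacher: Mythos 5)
Your proposal is correct and follows essentially the same route as the paper: the paper's proof is precisely the one-line computation $vZ = v\un_2 + \lambda\, v(Z\succ M) = v\un_2 + \lambda\,(vZ)\succ M$, i.e.\ left-multiplication of the matrix equation by $v$ together with the compatibility $(vZ)\succ M = v(Z\succ M)$. You merely make explicit the index bookkeeping and the (standard) uniqueness of the formal power series solution, both of which the paper leaves implicit.
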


{\bf Proof}: We immediately check:
 \allowdisplaybreaks{
\begin{eqnarray*}
	   vZ &=& v\un_2+\lambda v(Z\succ M)\\
	 	 &=& v\un_2+\lambda(vZ)\succ M.
\end{eqnarray*}}
\qed

\begin{prop}\label{prop:matSOL}
Let $X$ be the solution of equation (\ref{eq:order2}) in $\overline D[[\lambda]]$ with $d=\un$. Then the vector $Y:=(X,\,\lambda X\succ b)$ in $(\overline D[[\lambda]])^2$ is the solution of equation (\ref{eq:vect-order1}) with $v=(1,0) \in k^2$ and:
\begin{equation*}
	M=\begin{pmatrix}
		c & b\\
		a & 0
	     \end{pmatrix}.
\end{equation*}
\end{prop}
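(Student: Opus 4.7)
The strategy is a direct component-wise verification: unpack what the matrix dendriform equation \eqref{eq:vect-order1} says coordinate by coordinate, substitute the proposed $Y=(X,\lambda X\succ b)$, and check that the two resulting scalar identities hold, the first one being exactly \eqref{eq:order2} with $d=\un$ and the second one being tautological.

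First I would write out $v\un_2+\lambda Y\succ M$ in components. Since $\un_2$ is the diagonal matrix with $\un$'s on the diagonal (and zeros elsewhere, where by construction $0\succ x=x\succ 0=0$), and since $v=(1,0)$, the ``constant term'' is $(\un,0)$. Using the definition of $Y\succ M$ from Lemma~\ref{base} together with the specific matrix $M=\begin{pmatrix}c&b\\a&0\end{pmatrix}$, equation \eqref{eq:vect-order1} becomes the pair
\begin{align*}
    Y_1 &= \un + \lambda\bigl(Y_1\succ c + Y_2\succ a\bigr),\\
    Y_2 &= \phantom{\un+{}}\lambda\bigl(Y_1\succ b + Y_2\succ 0\bigr).
\end{align*}

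Next I would plug in the candidate $Y_1=X$ and $Y_2=\lambda X\succ b$. The second line becomes $\lambda X\succ b=\lambda X\succ b$ (using $Y_2\succ 0=0$), which is automatic. The first line becomes
\begin{equation*}
    X = \un + \lambda X\succ c + \lambda^2(X\succ b)\succ a,
\end{equation*}
which is precisely the hypothesis that $X$ solves \eqref{eq:order2} with $d=\un$. Hence the proposed $Y$ satisfies \eqref{eq:vect-order1} with the stated $v$ and $M$.

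There is essentially no obstacle here beyond bookkeeping: the only thing to be mindful of is the convention regarding the dendriform unit $\un$ and the zero entry in the matrix $M$, namely that $x\succ 0=0$ and that $\un_2$ is treated as the diagonal matrix with the dendriform unit $\un$ on the diagonal so that $v\un_2=(v_1\un,v_2\un)$ in $(\overline D[[\lambda]])^2$. Both are compatible with the axioms \eqref{unit-dend} and with the construction of $\overline{\Cal M_n(D)}$ given in Section~\ref{ssect:matrixDend}, so no further argument is needed.
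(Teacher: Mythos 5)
Your proposal is correct and follows essentially the same route as the paper: the paper's proof likewise computes $v\un_2+\lambda Y\succ M$ componentwise for $Y=(X,\lambda X\succ b)$, obtaining $(\un,0)+\lambda(X\succ c+\lambda(X\succ b)\succ a,\,X\succ b)=(X,\lambda X\succ b)$ by the defining equation \eqref{eq:order2} with $d=\un$. The only difference is presentational, namely that you state the two scalar identities explicitly rather than chaining the vector computation.
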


{\bf Proof}: One easily checks that:
 \allowdisplaybreaks{
\begin{eqnarray*}
	v\un_2+\lambda Y\succ M&=&(\un,\,0)+\lambda(X,\,\lambda X\succ b)\succ
		\begin{pmatrix}
			c&b\\
			a&0
		\end{pmatrix}\\
	&=&(\un,\,0)+\lambda\bigl( X\succ c+\lambda(X\succ b)\succ a,\,X\succ b\bigr)\\
	&=&(\un,\,0)+(X-\un,\,\lambda X\succ b)\\
	&=&Y.
\end{eqnarray*}}
\qed

Using Theorem \ref{thm:mainMAGNUS}, i.e. the pre-Lie Magnus expansion, we find immediately the solution to (\ref{eq:mat2order1}):

\begin{cor} \label{cor:matsolMAGNUS}
Let $\Omega'=\log^*Z \in \Cal M_2(D[[\lambda]])$, where $Z$ is the solution of equation (\ref{eq:mat2order1}) with
$M=\begin{pmatrix}c&b\\ a&0\end{pmatrix}$, given by the pre-Lie Magnus expansion:
\begin{equation*}
	\Omega'(\lambda M) = \lambda M - \frac 12 \lambda^2 M \rhd M 
							+ \frac14 \lambda^3 ( M \rhd M ) \rhd M + \lambda^3 \frac{1}{12}  M \rhd (M \rhd M) \cdots.
\end{equation*}
Then the solution $X$ of equation (\ref{eq:order2}), $d=\un$, is such that the line vector $(X,\,\lambda X\succ b)$ is the first line of the matrix $Z=\exp^*(\Omega')$, i.e.: 
$$  
	X=(1,0)Z(1,0)^{\rm{t}}.
$$
\end{cor}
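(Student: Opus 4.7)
The plan is to chain together the three immediately preceding results: Proposition \ref{prop:matSOL}, Lemma \ref{base}, and Theorem \ref{thm:mainMAGNUS}. First, I would apply Proposition \ref{prop:matSOL} to convert the scalar degree $(2,0)$ equation \eqref{eq:order2} (with $d=\un$) into a vector first-order equation: the vector $Y := (X,\,\lambda X\succ b) \in (\overline D[[\lambda]])^2$ satisfies
\begin{equation*}
	Y = v\un_2 + \lambda Y \succ M, \qquad v=(1,0),\ M = \begin{pmatrix} c & b \\ a & 0 \end{pmatrix}.
\end{equation*}

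Second, I would invoke Lemma \ref{base}: such a vector equation is solved by $vZ$, where $Z \in \overline{\Cal M_2(D)}[[\lambda]]$ is the matrix solution of $Z = \un_2 + \lambda Z \succ M$. Uniqueness is automatic since these are power-series equations solvable order-by-order in $\lambda$. Hence
\begin{equation*}
	(X,\,\lambda X\succ b) = (1,0)Z = (Z_{11},\,Z_{12}),
\end{equation*}
and reading off the first entry gives $X = Z_{11} = (1,0)\,Z\,(1,0)^{\rm t}$.

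Third, because \eqref{eq:mat2order1} is a degree $(1,0)$ dendriform equation in the unital matrix dendriform algebra $\overline{\Cal M_2(D)}[[\lambda]]$ constructed in Subsection \ref{ssect:matrixDend}, the pre-Lie Magnus expansion of Theorem \ref{thm:mainMAGNUS} applies verbatim inside this ambient dendriform algebra. It yields $Z = \exp^*(\Omega')$ with $\Omega' = \log^* Z$ obeying
\begin{equation*}
	\Omega'(\lambda M) = \sum_{m\ge 0}\frac{B_m}{m!}L^{(m)}_\rhd[\Omega'](\lambda M),
\end{equation*}
whose first terms are precisely those displayed in the statement.

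The only point requiring any genuine verification is that Theorem \ref{thm:mainMAGNUS} transfers to the matrix setting. This is not really a separate obstacle: the proof of the pre-Lie Magnus expansion uses only the axioms of a unital dendriform algebra, and Subsection \ref{ssect:matrixDend} establishes that $\overline{\Cal M_n(D)}$ satisfies them. Everything else is a bookkeeping step; no nontrivial estimate or combinatorial identity remains to be proved.
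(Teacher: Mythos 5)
Your proposal is correct and follows exactly the route the paper intends: Proposition \ref{prop:matSOL} reduces the degree $(2,0)$ equation to the vector equation, Lemma \ref{base} identifies its solution with $vZ$ (uniqueness being clear order-by-order in $\lambda$), and Theorem \ref{thm:mainMAGNUS} applies verbatim in the matrix dendriform algebra $\overline{\Cal M_2(D)}$ to give $Z=\exp^*(\Omega')$. The paper treats this corollary as an immediate consequence of those three results, so nothing further is needed.
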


The following corollaries are readily verified. 

\begin{cor} \label{cor:dcbaCase1}
Let $X$ be the solution of equation (\ref{eq:order2}) in $\overline D[[\lambda]]$. Then the vector $Y:=(X,\,\lambda X\succ b)$ in $(\overline D[[\lambda]])^2$ is the solution of equation: 
\begin{equation}
\label{eq:vect-order1-d}
	Y=v (d * \un_2) + \lambda Y \succ M,
\end{equation} 
with $v=(1,0) \in k^2$ and $M$ as in Proposition \ref{prop:matSOL}. 
\end{cor}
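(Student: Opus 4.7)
The plan is to verify directly that $Y = (X, \lambda X \succ b)$ solves equation (\ref{eq:vect-order1-d}), following verbatim the strategy used in the proof of Proposition \ref{prop:matSOL}. The only substantive change is to accommodate the nontrivial initial datum $d \in \overline D$ in place of $\un$.

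First I would interpret $d * \un_2 \in \overline{\Cal M_2(D)}[[\lambda]]$ as the diagonal matrix with $d$ on the diagonal, which is well-defined since $d * \un = \un * d = d$ by the unital axioms (\ref{unit-dend}). Accordingly, for $v = (1,0) \in k^2$ we get $v(d * \un_2) = (d, 0) \in (\overline D[[\lambda]])^2$, just as $v\un_2 = (\un, 0)$ in Proposition \ref{prop:matSOL}.

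Next, exactly as in the proof of that proposition, I would expand the right-hand side of (\ref{eq:vect-order1-d}) using the formula for $Y \succ M$ from Lemma \ref{base}:
\begin{align*}
v(d * \un_2) + \lambda Y \succ M
&= (d, 0) + \lambda (X, \lambda X \succ b) \succ \begin{pmatrix} c & b \\ a & 0 \end{pmatrix} \\
&= (d, 0) + \bigl(\lambda X \succ c + \lambda^2 (X \succ b) \succ a,\, \lambda X \succ b\bigr).
\end{align*}
Equation (\ref{eq:order2}) asserts precisely that $X - d = \lambda X \succ c + \lambda^2 (X \succ b) \succ a$, so the first component collapses to $d + (X - d) = X$ and the second to $\lambda X \succ b$. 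Thus the right-hand side equals $Y$, proving the claim.

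There is essentially no obstacle here: the verification is a transparent extension of the $d = \un$ case, with no new dendriform axiom manipulations needed beyond the matrix-dendriform product definitions already established. The only subtle point is to ensure that one never writes the forbidden expressions $\un \prec \un$ or $\un \succ \un$ in the computation; this is automatic, since every term involving a $\prec$ or $\succ$ comes paired with a factor of $\lambda$ applied to an element of $D$, and the only bare product with $\un$ that appears is the associative product $d * \un = d$, which is always legitimate in $\overline D$.
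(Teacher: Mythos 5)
Your verification is correct and is exactly the computation the paper has in mind: the paper offers no written proof (it says the corollary is ``readily verified''), and the intended argument is precisely the direct expansion of $v(d*\un_2)+\lambda Y\succ M$ mirroring the proof of Proposition \ref{prop:matSOL}, which you carry out accurately. Your remarks on interpreting $d*\un_2$ as the diagonal matrix with $d$ on the diagonal and on avoiding the undefined $\un\prec\un$, $\un\succ\un$ are consistent with the paper's conventions.
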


Here $d * \un_2$ is the diagonal matrix in $\Cal M_2(D)$ with $d$ on its diagonal and $0$ else. Using Lemma \ref{lem:m=1eq} we immediately get:

\begin{cor} \label{cor:dcbaCase2}
Let $\Omega'=\log^*Z \in \Cal M_2(D[[\lambda]])$, where $Z$ is the solution of equation (\ref{eq:mat2order1}) with $M$ as in Proposition \ref{prop:matSOL}, given by the pre-Lie Magnus expansion:
\begin{equation*}
	\Omega'(\lambda M) = \lambda M - \frac 12 \lambda^2 M \rhd M 
							+ \frac14 \lambda^3 ( M \rhd M ) \rhd M + \lambda^3 \frac{1}{12}  M \rhd (M \rhd M) \cdots.
\end{equation*}
The line vector $(X,\,\lambda X\succ b)$ is the first line of the matrix $U:=\bigl(d\un_2\prec\exp^*(-\Omega')\bigr)* \exp^*(\Omega')$.
\end{cor}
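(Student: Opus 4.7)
My plan is to combine Corollary \ref{cor:dcbaCase1} with the matrix dendriform formalism from Section \ref{ssect:matrixDend} and apply Lemma \ref{lem:m=1eq} inside $\Cal M_2(D)$. In other words, I would solve a genuine matrix equation of degree $(1,0)$ and then project back onto its first row.

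By Corollary \ref{cor:dcbaCase1}, the line vector $Y := (X,\,\lambda X\succ b)$ satisfies the vectorial equation $Y = v(d\un_2) + \lambda Y\succ M$ in $(\overline D[[\lambda]])^2$, with $v=(1,0)$ and $M$ as in Proposition \ref{prop:matSOL}. Rather than attack this vectorial recursion directly, I would lift it to the matrix equation
\begin{equation*}
W \;=\; d\un_2 \;+\; \lambda W \succ M
\end{equation*}
in $\overline{\Cal M_2(D)}[[\lambda]]$. Since $(\Cal M_2(D),\prec,\succ)$ is itself a dendriform algebra, this is of the degree $(1,0)$ form treated in the second half of Lemma \ref{lem:m=1eq}, applied inside $\Cal M_2(D)$. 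The lemma then yields $W = (d\un_2 \prec Z^{-1})*Z$, where $Z$ solves $Z = \un_2 + \lambda Z \succ M$; and Theorem \ref{thm:mainMAGNUS}, applied to the matrix dendriform algebra, identifies $Z = \exp^*(\Omega')$ and $Z^{-1}=\exp^*(-\Omega')$, so $W = U$.

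Next I would project back: a direct computation of the same shape as the proof of Lemma \ref{base} shows that the line vector $vW$ satisfies $vW = v(d\un_2) + \lambda(vW)\succ M$, which is the very recursion obeyed by $Y$. By uniqueness of the formal power series solution of a degree $(1,0)$ dendriform equation, $vW=Y$, and thus the first row of $U$ is precisely $(X,\,\lambda X\succ b)$, as claimed.

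The only point requiring real care is the case $d=\un$, for then $d\un_2 \prec Z^{-1}$ formally produces the undefined expression $\un\prec\un$ in its diagonal entries. I would handle this exactly as in the remark following Lemma \ref{lem:m=1eq}: using the matrix analogue of \eqref{eq:relationYZinverse}, namely $Z^{-1} = \un_2 - \lambda M \prec Z^{-1}$, one rewrites $(d\un_2\prec Z^{-1})*Z$ so that all intermediate terms live inside $\Cal M_2(D)[[\lambda]]$, where the dendriform operations are unambiguously defined. Beyond this subtlety, the argument is a mechanical transcription of Lemma \ref{lem:m=1eq} to the matrix dendriform setting.
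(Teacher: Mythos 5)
Your argument is correct and is exactly the route the paper intends: it gives no explicit proof, saying only ``Using Lemma \ref{lem:m=1eq} we immediately get,'' and your proposal is precisely the expansion of that one-liner --- lift to the matrix equation $W=d\un_2+\lambda W\succ M$, apply the second half of Lemma \ref{lem:m=1eq} in $\Cal M_2(D)$ together with Theorem \ref{thm:mainMAGNUS}, and project onto the first row via the Lemma \ref{base} computation and uniqueness of the formal solution. Your extra care about the undefined $\un\prec\un$ when $d=\un$ is a point the paper glosses over here (it handles that case separately in Corollary \ref{cor:matsolMAGNUS}), and your fix via the inverse relation \eqref{eq:relationYZinverse} is the right one.
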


The equation of degree $(0,2)$ in $\overline{D}[[\lambda]]$: 
\begin{equation}
\label{eq:order2S}
	\hat X = \un + \lambda\  c \prec  X + \lambda^2\   a \prec ( b \prec  \hat X).
\end{equation}
can be treated similarly:
 
\begin{prop}\label{prop:matSOLtrans}
Let $ \hat X$ be the solution of equation (\ref{eq:order2S}) in $\overline D[[\lambda]]$. Then the column vector $  \hat Y:= ( \hat X,\,\lambda\  b \prec  \hat X)^t$ in $(\overline D[[\lambda]])^2$ is a solution of equation: 
\begin{equation}
\label{eq:vect-order1trans}
	 \hat Y=\un_2 v +\lambda\   M^t \prec  \hat Y,
\end{equation}
with $v=(1,0)^t \in k^2$ and:
\begin{equation*}
	M^t= \begin{pmatrix}
 		c &  a\\
 		b & 0
		 \end{pmatrix}.
\end{equation*}
\end{prop}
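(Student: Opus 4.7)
The proposition is the mirror image of Proposition \ref{prop:matSOL}, obtained by swapping the left and right dendriform products, and by replacing row vectors with column vectors. Accordingly, the plan is to mirror the proof of Proposition \ref{prop:matSOL}: define a ``column-vector'' left action of matrices on vectors in $(\overline D[[\lambda]])^2$ by
\[
	(N\prec Y)_i := \sum_{j=1}^{2} N_{ij} \prec Y_j,
\]
establish a column-vector analog of Lemma \ref{base} (namely, that if $\widehat Z$ solves $\widehat Z = \un_2 + \lambda M^t \prec \widehat Z$ in $\overline{\Cal M_2(D)}[[\lambda]]$, then $\widehat Z v$ solves \eqref{eq:vect-order1trans} for any $v \in k^2$), and then directly verify the claimed substitution.

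The main (and only nontrivial) step is the substitution check. With $\widehat Y = (\widehat X,\ \lambda b\prec\widehat X)^t$ and $v = (1,0)^t$, I compute
\[
	M^t\prec \widehat Y
	=\begin{pmatrix} c\prec\widehat X+a\prec(\lambda b\prec\widehat X)\\ b\prec\widehat X+0\prec(\lambda b\prec\widehat X)\end{pmatrix}
	=\begin{pmatrix} c\prec\widehat X+\lambda a\prec(b\prec\widehat X)\\ b\prec\widehat X\end{pmatrix},
\]
so that
\[
	\un_2 v+\lambda M^t\prec\widehat Y
	=\begin{pmatrix}\un\\ 0\end{pmatrix}
	+\begin{pmatrix}\lambda c\prec\widehat X+\lambda^2 a\prec(b\prec\widehat X)\\ \lambda b\prec\widehat X\end{pmatrix}
	=\begin{pmatrix}\widehat X\\ \lambda b\prec\widehat X\end{pmatrix}=\widehat Y,
\]
where the first component equality is exactly the defining equation \eqref{eq:order2S}.

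I expect no real obstacle: the only point requiring a little care is the bookkeeping of the transposition, namely making sure that writing $\widehat Y$ as a column vector forces the matrix acting on the left to be $M^t$ rather than $M$, and that the dendriform product $\prec$ is placed on the appropriate side in the matrix/vector multiplication (so that the dendriform axioms are not needed beyond bilinearity, because only matrix indices get contracted, not dendriform compositions themselves). Once this is set up, the verification is a one-line computation as above, strictly parallel to the proof of Proposition \ref{prop:matSOL}.
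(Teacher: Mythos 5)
Your verification is correct, but you take a genuinely different route from the paper. The paper's proof of Proposition \ref{prop:matSOLtrans} is a one-line reduction: it observes that the solution of the degree-$(0,2)$ equation \eqref{eq:order2S} in $\overline D[[\lambda]]$ is exactly the solution of the degree-$(2,0)$ equation \eqref{eq:order2} in $\overline{D'}[[\lambda]]$, where $D'=(D,\preceq,\succeq)$ is the opposite dendriform algebra defined by $x\preceq y:=y\succ x$ and $x\succeq y:=y\prec x$, so that Proposition \ref{prop:matSOL} applies verbatim (with row vectors and right action in $D'$ corresponding to column vectors and left action in $D$, whence the transposed matrix $M^t$). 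That argument is shorter but implicitly relies on checking that the opposite structure again satisfies the dendriform axioms (the three axioms get permuted) and on correctly translating the statement back. Your approach instead redoes the substitution check directly for the column-vector left action $(N\prec Y)_i=\sum_j N_{ij}\prec Y_j$; the computation you display is correct — the first component reproduces the defining equation \eqref{eq:order2S} and the second component is tautological — and, as you note, only bilinearity is used, so nothing can go wrong. Your version is more self-contained and makes the mechanism explicit; the paper's version buys economy and makes clear that every degree-$(0,n)$ result is automatic from the degree-$(n,0)$ one, which is how the paper later dispatches the general case in Section \ref{sect:higherorder}. Both are complete proofs.
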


\begin{proof}
The solution of equation \eqref{eq:order2S} in $\overline D[[\lambda]]$ is exactly the solution of equation \eqref{eq:order2} in $\overline {D'}[[\lambda]]$ where $D'$ is the dendriform algebra $(D,\preceq,\succeq)$, with $x\preceq y:=y\succ x$ and $x\succeq y:=y\prec x$.
\end{proof}


\section{Higher order equations}
\label{sect:higherorder}

The general degree $(m,0)$ equation writes: 
\begin{equation}\label{m-zero}
     X  =  a_{00} +  \sum_{q=1}^{m} \lambda^{q}  \omega^{(q+1)}_\succ(X,a_{q1},\ldots, a_{qq}).
\end{equation}
The cases $m=3$ and $m=4$ can be worked out by hand and will give a hint for the general case. Indeed, the length $4$ line vector:
\begin{equation*}
	Y_3:=\bigl( X,\,\lambda X\succ a_{21},\,\lambda X\succ a_{31},\,\lambda^2(X\succ a_{31})\succ a_{32}\bigr)
\end{equation*}
is the solution of the equation:
\begin{equation*}
Y_3=(a_{00},0,0,0)+\lambda Y_3\succ M_3,\hbox{ with }
	M_3=\begin{pmatrix}
			a_{11}&a_{21}&a_{31}&0\\
			a_{22}&0        &0         &0\\
				 0&0        &0         &a_{32}\\
		      a_{33}&0        &0         &0
\end{pmatrix}.
\end{equation*}
The length $7$ line vector:
{\small{
\begin{equation*}
	Y_4:=\Big(X,\,\lambda X\succ a_{21},\,\lambda X\succ a_{31},
	\,\lambda^2(X\succ a_{31})\succ a_{32},
	\,\lambda X\succ a_{41},\,\lambda^2(X\succ a_{41})\succ a_{42},
	\,\lambda^3\bigl( (X\succ a_{41})\succ a_{42}\bigr)\succ a_{43}\Big)
\end{equation*}}}
is the solution of the equation:
\begin{equation*}
	Y_4=(a_{00},0,0,0,0,0,0)+\lambda Y_4\succ M_4,\hbox{ with }
	M_4= \begin{pmatrix}
			a_{11}&a_{21}&a_{31}&0        &a_{41} &0         &0\\
			a_{22}&0        &0         &0        &0          &0         &0\\
				 0&0         &0        &a_{32}&0          &0         &0\\
		      a_{33}&0		  &0		&0	      &0	      &0          &0\\
				0&0		  &0		&0	      &0	      &a_{42} &0\\
				0&0		  &0		&0	      &0	      &0          &a_{43}\\
		     a_{44}&0		 &0 		&0	      &0	      &0           &0
\end{pmatrix}.
\end{equation*}
We recognize the matrix $M_3$ as the upper left $4\times 4$-submatrix of $M_4$. The solution of the general equation \eqref{m-zero} will be the first coefficient of a vector $Y_m$ of length $1+m(m-1)/2$, whose coefficients (discarding the first one) are given by $\lambda^j \omega_\succ^{j+1}(X,a_{q1},\ldots,a_{qj}),\, 1\le j<q\le m$. They are displayed according to the lexicographic order of the pairs $(q,j)$ indexing them. The vector $Y_m$ is the solution of the equation
\begin{equation*}
	Y_m=(a_{00},\!\!\mathop{\underbrace{0,\ldots ,0}}_{\frac{m(m-1)}{2} \hbox{ \eightrm times}}\!\!)+\lambda Y_m\succ M_m,
\end{equation*}
where the square matrix $M_m$ of size $1+m(m-1)/2$ is recursively defined by:
\begin{equation}
	M_m=
		\begin{pmatrix}
			M_{m-1} & A_1\\
			A_2		& A_3\\
		\end{pmatrix}=
	\begin{pmatrix}
		M_{m-1}& \hskip -2mm
		\begin{pmatrix}
		a_{m1}&\ 0\ &\hskip 2mm 0\hskip 2mm &\cdots &\hskip 6mm 0\hskip 6mm\\
			  0&0&0&\cdots &0\\
		\vdots &\vdots&\vdots&\cdots&\vdots\\
			 0&0&0&\cdots &0
		\end{pmatrix}\\
&\\
		\begin{pmatrix}
			0&0&\cdots &0\\
			0&0&\cdots &0\\
	     \vdots &\vdots&\cdots&\vdots\\
		      0&0&\cdots &0\\
	  a_{mm}&0&\cdots &0
		\end{pmatrix}&
		\begin{pmatrix}
		  0&a_{m2}&0          &\cdots & 0\\
		  0&		  0&a_{m3}&\cdots & 0\\
	 \vdots &\vdots   &\ddots  &\ddots & \vdots\\
		 0&           0&          0&\cdots & a_{m(m-1)}\\
		 0&		  0&          0&\cdots &0
		\end{pmatrix}
\end{pmatrix}.
\end{equation}
We proceed then as in paragraph \ref{ssect:system}: according to Lemma \ref{lem:m=1eq}. The line vector $Y_m$ is the first line of the matrix of size $N:=1+m(m-1)/2$ defined by $\bigl( (a_{00}*\un_{N})\prec Z^{*-1}\bigr)*Z$, where $Z$ is the solution of the order $(1,0)$ matrix dendriform equation:
\begin{equation}\label{genmatrix}
	Z = \un_{N} + Z\succ M_m.
\end{equation}
In the simpler case when $a_{00}=\un$ instead of being an element of $D$, the line vector $Y_m$ is just the first line of the solution $Z$ of equation \eqref{genmatrix}.\\

The solution of general equation of order $(0,n)$ can then be derived immediately, by the same trick as in the proof of Proposition  \ref{prop:matSOLtrans}. Higher-order equations of type \eqref{maxEq} of order $(m,n)$ remain to be solved.


\section{Applications}
\label{sect:appl}

For completeness let us recall the recursions and its solutions we described in the foregoing sections. Let $(\overline{D}[[\lambda]],\prec,\succ)$ be a unital dendriform algebra, $a,b \in D$. The principal equations are:
$$
	Y= \un + \lambda a \prec Y  \qquad\qquad\ 	Z = \un - \lambda Z \succ b,   
$$
of degree $(0,1)$ and $(1,0)$, respectively, and with formal solutions $Y=Y(a)=\sum_{n\ge0} \omega^{(n)}_{\prec}(\lambda a)$ and $Z=Z(b)=\sum_{n\ge 0} \omega_{\succ}^{(n)}(- \lambda b)$ in $\overline{D}[[\lambda]]$. Recall that $Z(a)=Y^{-1}(a)$ (\ref{eq:relationYZinverse}). In Theorem \ref{thm:mainMAGNUS} (pre-Lie Magnus expansion) we showed that $Y(a)=\exp^*(\Omega'(\lambda a))$, whereas Theorem \ref{thm:mainFer} (pre-Lie Fer expansion) concludes that $Y(a)=\overrightarrow{\prod}_{n \ge 0}^* \exp^*(U'_n(a))$, $U'_0=\lambda a$. Another pair of recursions in $\overline{D}[[\lambda]]$ are: 
$$
	E= a + \lambda b \prec E  	\qquad\qquad\ 		F = c + F \succ \lambda d,   
$$
$a,b,c,d \in D$. Lemma \ref{lem:m=1eq} provides the solutions to these equations in $\overline D[[\lambda ]]$. They can be expressed in terms of $Y=Y(b)$ and $Z=Z(d)$, i.e. $E =  Y * \bigl( Y^{-1} \succ  \lambda a\bigr)$ respectively $F = \bigl(  \lambda c \prec Z^{-1} \bigr) * Z $. Finally, putting together the former two recursions we arrive at the equation of degree $(1,1)$:
$$
	X = a + \lambda X \succ b + \lambda c \prec X, 
$$  
$a,b,c \in D$, and its solution in terms of $Y=Y(c)$ and $Z=Z(b)$:
\begin{equation*}
	X = Y * \bigl( Y^{-1} \succ a  \prec Z^{-1} \bigr) * Z. 
\end{equation*}
For $c=-b \in D$ we arrive at the pre-Lie recursion in $\overline D$: 
$$
	X = a + X \rhd b,
$$
and its solution in terms if $Z=Z(-b)$:
$
	X= Ad^*_{Z^{-1}}\circ \Theta_{Z}(a).
$

Recall from the introduction the link between recursions in dendriform algebras and simple initial value problems, say, for instance in  algebras of matrix valued functions closed under the Riemann integral map. In fact, the link here is provided by the relation between associative Rota--Baxter and dendriform algebras.


\subsection{Rota--Baxter algebras}
\label{ssect:RB}

Recall \cite{Baxter,E,Rota} that an associative Rota--Baxter algebra (over a field $k$) is an associative $k$-algebra $A$ endowed with a $k$-linear map $R: A \to A$ subject to the following relation:
\begin{equation}\label{RB}
    R(a)R(b) = R\bigl(R(a)b + aR(b) + \theta ab\bigr).
\end{equation}
where $\theta \in k$. The map $R$ is called a {\sl Rota--Baxter operator of weight $\theta$\/}. The map $\widetilde{R}:=-\theta id-R$ also is a weight $\theta$ Rota--Baxter map. Both the image of $R$ and $\tilde{R}$ form subalgebras in $A$. Associative Rota--Baxter algebras arise in many mathematical contexts, e.g. in integral and finite differences calculus, but also in perturbative renormalization in quantum field theory~\cite{egm}.

A few examples are in order. First, recall the classical integration by parts rule showing that the ordinary Riemann integral is a weight zero Rota--Baxter map. Moreover, on a suitable class of functions, we define the following Riemann summation operators:
\begin{eqnarray}
    R_\theta(f)(x) := \sum_{n = 1}^{[x/\theta]} \theta f(n\theta)
    \qquad\ {\rm{and}} \qquad\
    R'_\theta(f)(x) := \sum_{n = 1}^{[x/\theta]-1} \theta f(n\theta),
\label{eq:le-clou}
\end{eqnarray}
which satisfy the weight $-\theta$ and the weight $\theta$ Rota--Baxter relation, respectively.  The summation operator:
\begin{equation}
\label{summation}
	S(f)(x) := \sum_{n\geq 1} \theta f(x + \theta n).
\end{equation}
also satisfies relation (\ref{RB}) on a suitable class of functions. Another useful example is given in term of the $q$-dilatation operator, $\sigma_qf(x):=f(qx)$. In fact, one shows that  the $q$-summation operator:
\begin{equation}
\label{qsummation}
	S_q(f)(x) :=\sum_{n\geq 0} \sigma_q^nf(x) = \sum_{n\geq 0} f(q^nx)
\end{equation}
satisfies relation (\ref{RB}) on a suitable class of functions, with $\theta=-1$. In fact, one may think of (\ref{RB}) as a generalized integration by parts identity. Indeed, the reader will have no difficulty in checking duality of~(\ref{RB}) with the `skewderivation' rule~\cite{Kalliope}:
\begin{equation*}
    \partial(fg) = \partial(f)g + f\partial(g) + \theta \partial(f)\partial(g),
\end{equation*}
i.e. $R\partial x=x + c$ and $\partial R x = x$, $x \in A$ and $c \in \ker \partial$. For instance, in the case of the $q$-summation operator (\ref{qsummation}), we find $\partial_qf(x):=(id-\sigma_q)f(x)=f(x)-f(qx)$, satisfying:
 \allowdisplaybreaks{
\begin{eqnarray*}
	\partial_q(fg) &=& \partial_q(f)g + \sigma_q(f)\partial_qg\\
	                     &=& \partial_q(f)g + f\partial_q(g) - \partial_q(f)\partial_q(g) . 
\end{eqnarray*}
From this one can prove the Rota--Baxter property for the $q$-summation operator $S_q$ (weight $\theta=-1$), which is equivalent to the 
so called $q$-integration by parts rule:
$$
	S_q(f)S_q(g)=S_q(S_q(f)g) + S_q(\sigma_q(f)S_q(g)).
$$ 
We call a $k$-algebra $A$ with both, a Rota--Baxter map $R$ and its corresponding skewderivation $\partial$ a Rota--Baxter pair $(A,R,\partial)$. Although, we should underline that, in general, Rota--Baxter algebras do not come with such a skewderivation, e.g. let $A$ be a $\mathbb{K}$-algebra which decomposes directly into subalgebras $A_1$ and $A_2$, then the projection to $A_1$, $R: A \to A$, $R(a_1,a_2)=a_1$, is an idempotent Rota--Baxter operator, i.e. of weight $\theta=-1$, see~\cite{Kalliope,EM} for more details. 

\begin{prop}\label{RBtridend}\cite {E}
Any associative Rota--Baxter algebra gives a {\sl{tridendriform algebra}}, $(T_R,<,>,\bullet_\theta)$, in the sense that the Rota--Baxter structure yields three binary operations:
 \allowdisplaybreaks{
\begin{eqnarray*}
    a < b := aR(b),
    \hskip 8mm
    a > b := R(a)b,
    \hskip 8mm
    a \bullet_\theta b := \theta ab,
\end{eqnarray*}}
satisfying the tridendriform algebra axioms (\ref{DT}).
\end{prop}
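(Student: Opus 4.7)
The plan is to verify the seven tridendriform axioms (\ref{DT}) one by one by direct computation in $A$, using only associativity of the underlying product and the defining weight-$\theta$ Rota--Baxter relation (\ref{RB}). The whole proof is thus a bookkeeping exercise; the only place where the Rota--Baxter identity itself is really used is in the two ``unbalanced'' axioms of the form $(x\!<\!y)\!<\!z = x\!<\!(y\star z)$ and $(x\star y)\!>\!z = x\!>\!(y\!>\!z)$, because these are the ones in which two $R$'s collide.

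For the first axiom I would compute
$$(x<y)<z = \bigl(xR(y)\bigr)R(z) = x\bigl(R(y)R(z)\bigr) = x\,R\bigl(R(y)z+yR(z)+\theta yz\bigr),$$
and then rewrite the three terms inside $R$ as $y>z$, $y<z$ and $y\bullet_\theta z$, so that the right-hand side becomes exactly $x<(y\star z)$. Dually, for the third axiom
$$x>(y>z) = R(x)\bigl(R(y)z\bigr) = \bigl(R(x)R(y)\bigr)z = R\bigl(R(x)y+xR(y)+\theta xy\bigr)z,$$
which regrouped is $(x>y+x<y+x\bullet_\theta y)>z = (x\star y)>z$. These are the two places where the weight-$\theta$ Rota--Baxter relation is genuinely invoked.

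The remaining five axioms reduce to pure associativity together with the scalar nature of $\theta$. For instance $(x>y)<z = R(x)y\cdot R(z) = R(x)\cdot yR(z) = x>(y<z)$; the three mixed axioms involving one $\bullet_\theta$ amount to moving the scalar $\theta$ through the product, e.g.\ $(x>y)\bullet_\theta z = \theta R(x)yz = R(x)(\theta yz) = x>(y\bullet_\theta z)$, and similarly for the two others; and $(x\bullet_\theta y)\bullet_\theta z = \theta^2 xyz = x\bullet_\theta(y\bullet_\theta z)$ is immediate.

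The only potential obstacle I anticipate is purely notational: making sure the sign/weight conventions for $\bullet_\theta$ (and the convention $\widetilde R=-\theta\,\mathrm{id}-R$ mentioned just before the proposition) are consistent, so that the $\theta yz$ term produced by (\ref{RB}) is indeed what has been named $y\bullet_\theta z$. Once that convention is fixed at the outset, the seven verifications are independent and each is at most a two-line calculation, so I would simply list them in order and let each line stand as its own proof.
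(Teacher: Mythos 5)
Your verification is correct and is exactly the standard argument: the paper itself states this proposition without proof, citing \cite{E}, and the proof given there is precisely this direct check of the seven axioms, with the weight-$\theta$ Rota--Baxter relation entering only in $(x<y)<z=x<(y\star z)$ and $(x\star y)>z=x>(y>z)$ and the remaining five axioms following from associativity alone. Nothing is missing.
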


The associated associative product $*_\theta$ is given by
 \allowdisplaybreaks{
\begin{eqnarray}
\label{RBassoprod}
    a *_\theta b := aR(b) + R(a)b + \theta ab.
\end{eqnarray}}
It is sometimes called the ``double Rota--Baxter product'', and verifies:
\begin{equation}
\label{RBhom}
    R( a *_\theta b) = R(a)R(b), \quad \widetilde{R}( a *_\theta b)=- \widetilde{R}(a)\widetilde{R}(b)
\end{equation}
which is just a reformulation of the Rota--Baxter relation (\ref{RB}). In fact, one easily shows that for any Rota--Baxter algebra $A$, the vector space $A$ equipped with the new product (\ref{RBassoprod}) is again a Rota--Baxter algebra, denoted $A_R$, though in general non-unital.    

From the general link between dendriform and tridendriform algebras it simply follows:

\begin{cor}\label{cor:RBdend}
Any associative Rota--Baxter algebra gives rise to a dendriform algebra structure, $(D_R,\prec,\succ)$, given by:
 \allowdisplaybreaks{
\begin{eqnarray}
\label{dendRB}
    a \prec b &:=& aR(b)+\theta ab =-a\widetilde{R}(b),\hskip 8mm a \succ b:=R(a)b.
\end{eqnarray}} 
\end{cor}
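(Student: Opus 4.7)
The plan is to chain two results already established in the paper. Proposition \ref{RBtridend} endows any associative Rota--Baxter algebra $(A,R)$ of weight $\theta$ with a tridendriform structure $(T_R,<,>,\bullet_\theta)$, whose operations are $a<b := aR(b)$, $a>b := R(a)b$, and $a\bullet_\theta b := \theta ab$. Section \ref{sect:dpse} furthermore records the general fact that any tridendriform algebra $(T,<,>,\bullet)$ gives rise to a dendriform algebra $(D_T,\prec_\bullet,\succ)$ by setting $\prec_\bullet := {<} + {\bullet}$ and $\succ := {>}$. Composing these two functors produces exactly the operations in the statement, and the identity $aR(b)+\theta ab = -a\widetilde{R}(b)$ follows at once from $\widetilde{R} = -\theta\,\mrm{id} - R$.

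What remains is the verification that $\prec$ and $\succ$ fulfil the three dendriform axioms (\ref{A1})--(\ref{A3}), which is just the bookkeeping for the tridendriform-to-dendriform reduction. For (\ref{A1}) I would expand
$$(a\prec b)\prec c = (a<b)<c + (a\bullet b)<c + (a<b)\bullet c + (a\bullet b)\bullet c,$$
and then apply the four tridendriform relations involving $\bullet$, together with $(x<y)<z = x<(y\star z)$, to collect the right-hand side as $a<(b\star c) + a\bullet(b\star c)$. Since $b\star c = b<c+b\bullet c+b>c = (b\prec c)+(b\succ c)$, this is exactly $a\prec(b\prec c + b\succ c)$. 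Axiom (\ref{A2}) collapses immediately: $(a\succ b)\prec c = (a>b)<c + (a>b)\bullet c = a>(b<c) + a>(b\bullet c) = a\succ(b\prec c)$, where the two middle equalities are tridendriform axioms. Axiom (\ref{A3}) is a direct restatement of the third tridendriform relation $(a\star b)>c = a>(b>c)$, once one notices $a\star b = a\prec b + a\succ b$.

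There is no genuine obstacle here; the only point requiring care is the asymmetric bundling $\prec := {<}+{\bullet}$ versus $\succ := {>}$ (rather than the reverse), which is forced by the placement of $\bullet$ in the seven tridendriform axioms (specifically, by $(x<y)\bullet z = x\bullet(y>z)$ and $(x\bullet y)<z = x\bullet(y<z)$, which make $\bullet$ behave on the right like $<$ and on the left like $<$ as well). This asymmetric bundling is what ensures the resulting $\prec$ absorbs the diagonal term $\theta ab$ consistently with the Rota--Baxter relation (\ref{RB}), and recovers the weight-zero case as the familiar dendriform algebra of a Rota--Baxter algebra of weight $0$.
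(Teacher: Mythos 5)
Your proposal is correct and follows exactly the paper's own route: the corollary is deduced by composing Proposition \ref{RBtridend} with the tridendriform-to-dendriform reduction $\prec\,:=\,<+\,\bullet$, $\succ\,:=\,>$ recorded in Section \ref{sect:dpse}, with the identity $aR(b)+\theta ab=-a\widetilde R(b)$ coming from $\widetilde R=-\theta\,\mrm{id}-R$. Your explicit check of axioms (\ref{A1})--(\ref{A3}) merely fills in the verification the paper leaves to the reader, so there is nothing genuinely different here.
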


The dendriform pre-Lie products, (\ref{prelie1}) and (\ref{prelie2}), can be written explicitly:
$$
	a \rhd b = [R(a),b] - \theta ba \qquad\qquad\ a \lhd b = [a,R(b)] + \theta ab.
$$

Recall \cite{EM} that we extend the dendriform structure $(D_R,\prec,\succ)$ on the Rota--Baxter algebra $(A,R)$ to $\overline{D}_R$ by setting $R(\un):=1$, $\wt R(\un):=-1$, and $\un.x=x.\un=0$ for any $x\in\overline{D}_R$. This is consistent with the axioms (\ref{unit-dend}) which in particular yield $\un\succ x=R(\un)x$ and $x\prec \un=-x\wt R(\un)$.


\subsection{A link with generalized initial value problems}
\label{ssect:genIVP}

Let us go back to Remark \ref{rmk:pre-LieMagnus}. Assume for the moment that we work in a noncommutative unital function algebra $\mathcal{F}$, say, suitable $n \times n$-matrix valued functions, with a Rota--Baxter pair $(R,\partial)$. Let us consider the initial value problems (IVPs) 
\begin{eqnarray}
\label{eq:genIVP}
	\dot{Y}:= \partial Y = YB - CY,\;\; Y(0)=Y_0
\end{eqnarray}
with $B,C \in \mathcal{F}$. Iserles in~\cite {Iserles01} looked at this equation in the classical setting of Remark \ref{rmk:pre-LieMagnus}, where its straightforward solution is given in terms of the classical Magnus expansion (\ref{eq:clMagnus}):
\begin{eqnarray}
\label{eq:SolgenIVP}
	Y=-\exp\bigl(\Omega(C)\bigr)Y_0\exp\bigl(-\Omega(B)\bigr).
\end{eqnarray}
Here $U:=\exp(\Omega(B))$ and $V:=\exp(-\Omega(C))$ solve the IVPs, $\dot U=-UB$, $U(0)=1$ and $\dot V=CV$, $V(0)=1$, respectively. 

Coming back to equation (\ref{eq:genIVP}) in the context of a Rota--Baxter pair $(R,\partial)$ of weight zero, we may transform it into the recursion for $\dot Y$:
$$
	\dot{Y}= A + R(\dot Y) A - BR(\dot Y),\;\; A = Y_0B - CY_0,
$$ 
which now becomes a dendriform equation of degree $(1,1)$ for the element $X:=\dot Y$:
$$
	X = A + X \succ B +  C \prec X.
$$  
One verifies by using the relation between $\partial$ and $R$ as well as (\ref{RBhom}), that $R$ maps the solution $X$, see (\ref{eq:relation1Sol}), of this equation to (\ref{eq:SolgenIVP}), i.e. $R(X)=Y$. 

\smallskip

However, in the light of the fact that Rota--Baxter algebras in general may not come with a corresponding skewderivation, it is crucial to realize that $X$ is simply an element in $A_R$, the Rota--Baxter algebra with product (\ref {RBassoprod}). Indeed, we would like to emphasize that our findings in the context of dendriform algebras hold through for any associative Rota--Baxter algebra $(A,R)$.          

Hence, starting with a Rota--Baxter algebra $(A,R)$, Theorem \ref{thm:mainMAGNUS} (resp. Theorem \ref{thm:mainFer}) gives us the solution $\hat{Y}=\exp(R(\Omega'(a))$ to the equation $\hat{Y} = 1 + \lambda R(a\hat{Y}) $. We may interpret this as the "integral-like" equation corresponding to the IVP, $\dot Y = a Y$, $Y_0=1$. Though, we only used (\ref{RBhom}) as well as  (\ref{unit-dend}), and interpret $\dot Y$ purely as an element in $A_R$. Theorem \ref{thm:dendsolution1} then implies for $b=0$ the solution to recursion $\hat{E} = \lambda a + \lambda R(c\hat{E}) $, given by $\hat{E}=\exp(R(\Omega'(c))R(\exp(R(\Omega'(c))a)$, analogously for $c=0$. From a dendriform algebraic point of view the classical IVP (\ref{eq:genIVP}) should be replaced by an equation in terms of the pre-Lie product:
$$
	X=a + X \rhd b,
$$   
$a,b \in A$, where its solution  (\ref{eq:solPre-Lie}) again is an element in $A_R$:
$$ 
	X = Ad^*_{\exp^*(-\Omega'(-b))} \circ \Theta_{\exp^*(\Omega'(-b))}(a). 
$$
The formal limit of $\theta \to 0$ reduces to the Lie bracket flow like equation. It is important to remark that, as there are two natural Rota--Baxter maps on $A$, i.e. $R$ and $\tilde{R}$,  we can map $X \in A_R$ either via $R$ to $Y:=R(X) \in A$ or via $\tilde{R}$ to $\tilde{Y}:=\tilde{R}(X) \in A$.


\subsection{A link with the Riccati differential equation}
\label{ssect:Riccati}

The Riccati differential equation writes:
\begin{equation}
\label{riccati}
	\dot y = p + qy + ry^2,
\end{equation}
where $y,p,q$ and $r$ are differentiable functions on the real line. We suppose for convenience that $r$ never vanishes. Looking at the transformation:
\begin{equation}
\label{changefun}
	y = \frac{\dot w}{r w}
\end{equation}
one easily checks that the solutions of \eqref{riccati} are also solutions of the following linear homogeneous second-order differential equation:
\begin{equation}
\label{riccati2}
	r \ddot w - (\dot r+qr)\dot w + pr^2w = 0.
\end{equation}
Now consider equation \eqref{eq:order2}. Recall Remark~\ref{rmk:pre-LieMagnus}, in this particular situation, we find a dendriform algebra structure on the set $\mathcal{F}$ of twice differentiable functions on $\R$, which we denote by $D$, with dendriform left and right products:
\begin{equation}
	f \prec g:=f \cdot R(g)\qquad
	f \succ g:=R(f)\cdot g.
\end{equation}
Here $R=I$ is the Riemann integral operator on $\mathcal{F}$ defined by:
\begin{equation}
	R(f)(x):=\int_0^x f(t)\,dt.
\end{equation}
Recall from \cite{EMP07b, EM} that the operator $R$ (which is a Rota--Baxter map of weight zero) extends to $\overline{D}$ by putting $R(\un)=1$, where $1$ is the usual unit of the algebra of functions, i.e. the function equal to $1$ everywhere. The operator $R$ extends naturally to $\overline D[[\lambda]]$ by
$\lambda$-linearity. Starting from a solution $X$ of \eqref{eq:order2} and putting $y:=R(X)$ we have:
\begin{equation}
	y = 1+ \lambda R\bigl(R(X)c\bigr) + \lambda^2 R\Big(R\bigl(R(X)a\bigr)b\Big),
\end{equation}
and hence:
\begin{equation}
	\dot y=\lambda yc + \lambda^2R\bigl(ya\bigr)b.
\end{equation}
We suppose at this point that $b$ is a nowhere vanishing function on $\R$. Dividing out by $b$ and differentiating once again we obtain:
\begin{equation}
	\frac 1b \ddot y - \frac{\dot b}{b^2}\dot y = \lambda y \frac{d}{dt} \frac{c}{b} + \lambda  \frac{c}{b}  \dot y +  \lambda^2 ya,
\end{equation}
hence:
\begin{equation}
	b \ddot y - (\dot b + \lambda bc)\dot y -  (\lambda \frac{d}{dt} \frac{c}{b} + \lambda^2 a) b^2 y = 0.
\end{equation}
We recognize a formal version of equation \eqref{riccati2}, with $p=-(\frac{d}{dt} \frac{q}{b} + \lambda^2 a)$, $q=\lambda c$ and $r=b$.


\subsection{Vogel's identity}
\label{ssect:VogelsIdentity}

We present here a generalization of an operator identity and its solution, found by Vogel in \cite{Vogel} in the context of probability theory, to noncommutative associative Rota--Baxter algebras.  

Let $(A,R)$ be an associative Rota--Baxter algebra of weight $\theta \in \mathbb{K}$, $a,b,c \in A$. The following equation defined in $A[[\lambda]]$ appeared in \cite{Vogel}:    
\begin{equation}
\label{eq:Vogel}
	X = a + \lambda R(X)b + \lambda c\tilde{R}(X).
\end{equation} 
For $A$ being commutative Vogel found the solution, $X=B-C$, where $C=C(b,c)$ and $B=B(b,c)$ are defined in $A[[\lambda]]$ as follows:
\begin{eqnarray}
\label{eq:solVogel1}
	C &:=& \exp\Big(R\bigl(\alpha(\lambda)\bigr)\Big)R\Big(a \exp\bigl( R\bigl(\theta^{-1}\log(1+\lambda \theta c)\bigr) + \tilde{R}\bigl(\theta^{-1}\log(1+\lambda \theta b)\bigr)\bigr)\Big)\\
\label{eq:solVogel1}
	B &:=& \exp\Big(-\tilde{R}\bigl(\alpha(\lambda)\bigr)\Big)\tilde{R}\Big(a \exp\bigl(R\bigl( \theta^{-1}\log(1+\lambda \theta  c)\bigr) + \tilde{R}\bigl(\theta^{-1}\log(1+\lambda \theta  b)\bigr)\bigr)\Big),
\end{eqnarray} 
and $\theta \alpha(\lambda):= \log(1+\lambda \theta b) - \log(1+\lambda \theta c)$ \footnote{We should remark that, following our convention, Vogel uses a commutative Rota--Baxter algebra of weight $\theta = -1$.}. It is obvious that Vogel's identity simply translates into the dendriform identity (\ref{eq:degree1relation}): 
$$
	X=a+\lambda X \succ b - \lambda c \prec X,
$$
with its general solution  $X = Y * \bigl( Y^{-1} \succ a  \prec Z^{-1} \bigr) * Z $, $Y=Y(-c)$ and $Z=Z(b)=Y(-b)^{-1}$, i.e.: 
$$
X =  \exp^*\bigl(\Omega'(- \lambda c)\bigr) * \Big( \exp^*\bigl(-\Omega'(- \lambda c)\bigr) \succ
	a \prec \exp^*\bigl((\Omega'(- \lambda b))\bigr)\Big) * \exp^*\bigl(- \Omega'(- \lambda b)\bigr).
$$
Rewriting it into the Rota--Baxter representation this gives the solution to Vogel's identity in a noncommutative Rota--Baxter algebra. It is an interesting exercise to retrieve Vogel's solution from the dendriform one in the case of a commutative Rota--Baxter algebra. Hence, first observe that in this case:
$$
	a \rhd b = a \succ b - b \prec a = R(a)b - (-b\tilde{R}(a))= R(a)b -  \theta ba - bR(a)=[R(a),b] - ba = -  \theta ba.
$$ 
Such that: 
$$
	\Omega'(\lambda a) = \sum_{n \ge 0} \frac{B_n}{n!} L_\rhd[\Omega']^{(n)}(\lambda a) = \sum_{n \ge 0} \frac{B_n}{n!} (- \theta)^n \lambda (\Omega'(\lambda a))^{n} a =  - \frac{ \log(1-\lambda  \theta a)}{ \theta }.
$$
By recalling our convention (\ref{unite}) and using commutativity as well as  the well known identity for the product (\ref{RBassoprod}), $ \theta a * b= \tilde{R}(a)\tilde{R}(b)-R(a)R(b)$, we find:
\begin{eqnarray*}
X \!\!\!\!\!&=&\!\!\!\! \exp\bigl( \tilde{R}(\frac{\log(1 + \lambda \theta c)}{ \theta })\bigr) \exp\bigl(- \tilde{R}(\frac{\log(1 + \lambda \theta b)}{ \theta })\bigr)
    \tilde{R}\Bigl(a  \exp\bigl(R (\frac{\log(1 + \lambda  \theta c)}{ \theta }) +\tilde{R}(\frac{\log(1 + \lambda \theta  b)}{ \theta })\bigr)\Bigr)\\
   &&\!\!- \exp\bigl( -R(\frac{\log(1 + \lambda \theta  c)}{ \theta })\bigr) \exp\bigl( R(\frac{\log(1 + \lambda \theta  b)}{ \theta })\bigr)
    R\Bigl( a \exp\bigl(R (\frac{\log(1 + \lambda \theta  c)}{ \theta }) +\tilde{R}(\frac{\log(1 + \lambda \theta  b)}{ \theta })\bigr)\Bigr).
 \end{eqnarray*}


\subsection{Rooted trees and the coefficients in the pre-Lie Magnus expansion}
\label{ssect:trees}

The final section is devoted to a closer analysis of the pre-Lie Magnus expansion. Iserles and N{\o}rsett \cite{IsNo99} used planar rooted binary trees to disentangle the combinatorial structure underlying the classical Magnus expansion, see also \cite{Iserles00,Iserles}. Here we propose the use of planar rooted trees to represent the pre-Lie Magnus expansion. This, of course, incorporates the classical case but, is more in the line of --non-commutative-- Butcher series. 

However, let us first use planar rooted binary trees. Recall $\Omega'(\lambda a ) =\sum_{n>0} \lambda^n \Omega'_n(a)$:
$$
	\Omega'(\lambda a) = \sum_{n\ge 0} \frac{B_n}{n!} L^{(m)}_\rhd[\Omega'](\lambda a).
$$ 
This writes out as:
\begin{align*}
	\Omega'(\lambda a) &= \lambda a - \lambda^2 \frac 12 a \rhd a 
							+ \lambda^3 \frac14 (a \rhd a) \rhd a + \lambda^3 \frac{1}{12}  a \rhd (a \rhd a) - \lambda^4 \frac 18  ((a \rhd a) \rhd a ) \rhd a \\
	& -\lambda^4 \frac{1}{24}\Big( (a \rhd (a \rhd a )) \rhd a + a \rhd ((a \rhd a ) \rhd a) +  (a \rhd a) \rhd (a  \rhd a)  \Big) +\ldots
\end{align*}
At fifth order we have ten terms. Using planar binary rooted trees to encode the pre-Lie product: 
$$
    \tree \quad \sim \ a \rhd a,
$$
we find at fourth order:
$$
    -\frac{1}{8}\ \treeC  \hskip 8mm
    -  \hskip 3mm \frac{1}{24}\biggl(
    \hskip 1mm
    \treeG  \hskip 8mm +  \hskip 3mm
    \treeF  \hskip 8mm +  \hskip 3mm
    \treeD  \hskip 7mm
    \bigg).
$$
In \cite{EM} we observed that from order four upwards the left pre-Lie relation (\ref{prelie1}) implies a reduction in the number of terms. Indeed, at order four we observe a reduction from the four terms above to the following two terms:
$$
    -\frac{1}{6}\ \treeC \hskip 10mm -  \hskip 3mm \frac{1}{12} \, \treeF
$$
thanks to the left pre-Lie relation:
$$
    \treeD \hskip 8mm - \hskip 2mm \treeC
           \hskip 8mm = \hskip 2mm
    \treeF \hskip 8mm - \hskip 2mm \treeG 
$$ 

To go beyond this order we propose another approach in the spirit of non-commutative Butcher series.
In fact, one can write the pre-Lie Magnus expansion like a classical Butcher series using ordinary planar rooted trees. By definition a planar rooted tree $t$ is made out of vertices and nonintersecting oriented edges such that all but one vertex have exactly one incoming line. For a tree $t$ we denote by $V(t)$ and $E(t)$ the sets of vertices and edges, respectively. By $f(v)$, $v \in V(t)$ we denote the number of outgoing edges, i.e. the fertility of the vertex $v$ of the rooted tree $t$. The degree of a tree $deg(t)$ is given by its number of vertices. We denote the graded vector space of planar rooted trees by $\mathcal{T}_{pl}:=\oplus_{m > 0}\mathcal{T}^{(m)}_{pl}$. Let $t=B_+(t_1,\cdots, t_n)$, where $B_+$ denotes the grafting operation, joining the $n$ roots via $n$ new edges to a new vertex, which becomes the root of the new tree $t$ and $deg(t)=\sum_{i=1}^n deg(t_i) + 1$. Now, we define the linear map $\alpha: \mathcal{T}_{pl} \to k$:
$$
	\alpha(t): = \frac{B_n}{n!} \prod_{i=1}^{n} \alpha(t_i) =  \prod_{v \in V(t)}  \frac{B_{f(v)}}{f(v)!}.
$$ 
We see immediately that this function maps a rooted tree $t$ containing vertices of fertility $f(v)=2n+1$, $n>0$, $v \in V(t)$ to zero.

\begin{lem} 
\label{lem:kernelalpha}
	$ker(\alpha)=\{t \in \mathcal{T}_{pl} \, |\;  \exists v\in V(t) : f(v)=2n+1, n>0\}$
\end{lem}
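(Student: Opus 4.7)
The lemma is essentially a restatement of the classical fact about the vanishing of odd-indexed Bernoulli numbers. The strategy is to read off the kernel directly from the product formula for $\alpha$, which is already given in the statement.

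My plan is first to recall (or quickly verify by induction on the degree of $t$) that the two expressions for $\alpha$ given in the text are genuinely equal:
\begin{equation*}
	\alpha(t) \;=\; \frac{B_n}{n!}\prod_{i=1}^n \alpha(t_i) \;=\; \prod_{v\in V(t)} \frac{B_{f(v)}}{f(v)!},
\end{equation*}
where $t = B_+(t_1,\ldots,t_n)$. The induction is immediate: the new root created by $B_+$ has fertility exactly $n$, whereas the fertilities of all the vertices inside the $t_i$'s are unchanged, so the product over $V(t)$ factors as $\frac{B_n}{n!}$ times the products over each $V(t_i)$.

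Once the product formula is in place, $\alpha(t) = 0$ if and only if at least one factor $\frac{B_{f(v)}}{f(v)!}$ vanishes, which in turn happens if and only if $B_{f(v)} = 0$ for some $v \in V(t)$. Thus the statement reduces to: $B_k = 0$ if and only if $k$ is odd and $k\ge 3$. The direction $\supseteq$ of the lemma follows from the vanishing of odd-indexed Bernoulli numbers $B_{2n+1} = 0$ for $n\ge 1$, a standard property extracted from the generating function $\frac{x}{e^x-1}+\frac{x}{2}$ being even in $x$. The direction $\subseteq$ requires the complementary nonvanishing: $B_0 = 1$ and $B_1 = -\tfrac{1}{2}$ are plainly nonzero, and for even indices $B_{2k}\neq 0$ follows from Euler's formula $B_{2k} = (-1)^{k+1}\,\frac{2(2k)!}{(2\pi)^{2k}}\,\zeta(2k)$, together with $\zeta(2k) > 0$ for $k\ge 1$.

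There is no real obstacle here: both directions are immediate once the product formula is recorded and the standard facts about Bernoulli numbers are cited. The only mild care needed is to note that fertility $f(v) = 1$ is allowed in the kernel description (so the clause ``$n > 0$'' in the statement matters): a vertex with $f(v)=1$ contributes the nonzero factor $B_1/1! = -\tfrac{1}{2}$, which is why linear chains of vertices do not force $\alpha(t)$ to vanish, whereas any vertex of fertility $3, 5, 7, \ldots$ does.
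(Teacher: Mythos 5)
Your proof is correct and follows exactly the reasoning the paper leaves implicit: the paper states this lemma without proof, remarking only that one ``sees immediately'' from the product formula $\alpha(t)=\prod_{v\in V(t)}B_{f(v)}/f(v)!$ that trees with a vertex of odd fertility $\ge 3$ are killed. Your write-up simply makes both inclusions explicit --- the vanishing of $B_{2n+1}$ for $n\ge 1$ and the nonvanishing of $B_0$, $B_1$ and $B_{2k}$ --- which is a faithful and complete expansion of the paper's intended argument.
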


In the following we denote by $\mathcal{T}^{e1}_{pl}$ the graded vector subspace of planar rooted trees excluding trees $t$ with vertices of fertility $f(v)=2n+1$, $n>0$, $v\in V(t)$. Recall 
$
   r_{\rhd}^{(n)}(a_1,\dots,a_n) :=
    a_1 \rhd \bigl( a_2 \rhd \bigl( a_3 \rhd
    \cdots (a_{n-1} \rhd a_n) \bigr) \cdots \bigr)$.
Next, we introduce the tree functional $F$,  $F[\ta1\ ](a)=a$ and for  $t=B_+(t_1\cdots t_n)$, $n \ge 1$:
$$
	F[t](a):= r_{\rhd}^{(n+1)}\big(F[t_1](a),\dots,F[t_n](a),a\big). 
$$
\begin{thm}
The pre-Lie Magnus expansion can be written:
\begin{equation}\label{plmplanar}
	\Omega'(a) = \sum_{t \in \mathcal{T}^{e1}_{pl}} \alpha(t) F[t](a).
\end{equation}
\end{thm}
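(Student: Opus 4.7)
The strategy is to show that the right-hand side of \eqref{plmplanar} satisfies the same fixed-point recursion \eqref{main3} as $\Omega'(a)$, and then invoke uniqueness of the solution order by order in the grading.

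\textbf{Step 1: Set up.} Define
\[
    \Phi(a) := \sum_{t \in \mathcal{T}^{e1}_{pl}} \alpha(t)\, F[t](a),
\]
a formal series whose degree-$n$ component is the sum over $t$ with $|V(t)|=n$. The unique tree of degree one is $\ta1$ with $\alpha(\ta1)=B_0/0!=1$ and $F[\ta1](a)=a$, so the degree-one contribution is exactly $a$.

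\textbf{Step 2: Root decomposition.} Any tree $t$ of degree $\ge 2$ can be written uniquely as $t = B_+(t_1,\ldots,t_n)$ for some $n\ge 1$ and subtrees $t_1,\ldots,t_n$. Note that the condition defining $\mathcal{T}^{e1}_{pl}$ is local on vertex fertilities, so $t \in \mathcal{T}^{e1}_{pl}$ forces each $t_i \in \mathcal{T}^{e1}_{pl}$ and $n \in \{1,2,4,6,\ldots\}$ (since the root has fertility $n$, and odd $n\ge 3$ is forbidden). Conversely, if each $t_i \in \mathcal{T}^{e1}_{pl}$ then $t \in \mathcal{T}^{e1}_{pl}$ iff $n$ is not an odd integer $\ge 3$. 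Because $B_n = 0$ for every odd $n \ge 3$, we may freely extend all sums to run over every $n \ge 1$ without changing the value.

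\textbf{Step 3: Combine the recursions for $\alpha$ and $F$.} By definition
\[
    \alpha(B_+(t_1,\ldots,t_n)) = \frac{B_n}{n!}\prod_{i=1}^n \alpha(t_i),
    \qquad
    F[B_+(t_1,\ldots,t_n)](a) = F[t_1](a) \rhd \bigl(F[t_2](a) \rhd \cdots (F[t_n](a) \rhd a)\cdots\bigr).
\]
Summing over $t$ of degree $\ge 2$ and then using multilinearity of the pre-Lie product $\rhd$ in each of its $n+1$ slots,
\begin{align*}
    \Phi(a) - a
    &= \sum_{n\ge 1}\frac{B_n}{n!} \sum_{t_1,\ldots,t_n\in\mathcal{T}^{e1}_{pl}} \Bigl(\prod_i\alpha(t_i)\Bigr) F[t_1](a)\rhd\!\bigl(F[t_2](a)\rhd\cdots (F[t_n](a)\rhd a)\bigr) \\
    &= \sum_{n\ge 1}\frac{B_n}{n!}\, \Phi(a)\rhd\bigl(\Phi(a)\rhd\cdots(\Phi(a)\rhd a)\bigr)
     = \sum_{n\ge 1}\frac{B_n}{n!}\, L^{(n)}_\rhd[\Phi(a)](a).
\end{align*}
Adding back the $n=0$ term (which is just $a$, since $B_0=1$) gives
\[
    \Phi(a) = \sum_{n\ge 0}\frac{B_n}{n!}\, L^{(n)}_\rhd[\Phi(a)](a).
\]

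\textbf{Step 4: Uniqueness.} This is precisely the defining recursion \eqref{main3} of the pre-Lie Magnus expansion. Because each $L^{(n)}_\rhd$ raises the degree by $n$, the recursion determines its solution uniquely order by order in the grading. Hence $\Phi(a) = \Omega'(a)$, which is \eqref{plmplanar}.

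The main subtlety is the bookkeeping of the restriction to $\mathcal{T}^{e1}_{pl}$ in Step 2: one must check that the root-decomposition bijection is compatible with this subset up to the extraneous terms killed by the vanishing odd Bernoulli numbers. Everything else is a direct unfolding of the definitions of $\alpha$ and $F$ together with bilinearity of $\rhd$.
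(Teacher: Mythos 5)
Your proposal is correct and follows essentially the same route as the paper: decompose each tree at its root, unfold the recursive definitions of $\alpha$ and $F$ to show the right-hand side satisfies the fixed-point recursion \eqref{main3}, and conclude by uniqueness of its solution. If anything, you are more explicit than the paper about the two points it leaves implicit — that the restriction to $\mathcal{T}^{e1}_{pl}$ is compatible with the root decomposition because the discarded root-fertilities carry vanishing Bernoulli numbers, and that the recursion determines its solution order by order.
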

\begin{proof}
The fact that the sum only includes planar rooted trees with even vertices and vertices of fertility one follows form Lemma \ref{lem:kernelalpha}. We just have to prove that the RHS of \eqref{plmplanar} verifies equation \eqref{main3} (with $\lambda$ set to $1$). Indeed, if we denote by $\Omega''$ this right-hand side we have:
\begin{eqnarray*}
\Omega''&=&\sum_{t \in \mathcal{T}^{e1}_{pl}} \alpha(t) F[t](a)\\
	&=&
	\sum_{m\ge 0}\ \ \sum_{t_1,\ldots t_m\in
  	\mathcal{T}^{e1}_{pl}}\frac{B_m}{m!}\alpha(t_1)\cdots\alpha(t_m)r_\rhd^{(m+1)}\big(F[t_1](a),\dots,F[t_n](a),a\big)\\
	&=&
	\sum_{m\ge  0}
	\frac{B_m}{m!}r_\rhd^{(m+1)} 
	\big(\mathop{\underbrace{\Omega'',\ldots,\Omega'' }}
	\limits_{m \hbox{ \eightrm times}}, a\big),
\end{eqnarray*}
hence $\Omega''=\Omega'$.
\end{proof}
\begin{rmk}
Let $T(z)=\sum_{k\ge 0}T_kz^k$ be the Poincar\'e series of
$\mathcal{T}^{e1}_{pl}$. Using the methods of \cite{D} we can prove that $T$
is the solution of the following functional equation:
\begin{equation}
T(z)-z^2T(z)^3=\frac{1}{1-z},
\end{equation}
which in turn yields the recursive definition of the coefficients $T_n$:
\begin{equation}
T_n=1+\sum_{p,q,r\ge 0,\,p+q+r=n-2}T_pT_qT_r.
\end{equation}
This sequence matches with
sequence [A049130] in Sloane's encyclopedy \cite{S}. On can see this by noticing that the series $W=zT$ satisfies the functional equation\footnote{We greatly thank Lo\"\i c Foissy for having indicated this point to us}:
\begin{equation}
W(z)-W(z)^3=\frac{z}{1-z},
\end{equation}
hence $W$ is the reversion of the series $z\mapsto\displaystyle \frac{z(z-1)(z+1)}{z^3-z+1}$. The first terms of the sequence are 
$$1,1,2,4,10,26,73,211,630,1918,...$$.
\end{rmk}
This simple formula provides a simple and efficient way to expand Magnus' --pre-Lie-- recursion. Let us check a few examples:
\begin{align*}
	\Omega'(a)&=  \alpha(\ta1\ ) F[\ta1\ ](a) + \alpha(\tb2\ ) F[\tb2\ ](a) 
										+ \alpha(\tc3\ ) F[\tc3\ ](a)
										+ \alpha(\td31\ ) F[\td31\ ](a) +\cdots\\
			    &= a + B_1  r_{\rhd}^{(2)}\big(F[\ta1\ ](a),a\big)
				      + B_1^2 r_{\rhd}^{(2)}\big(F[\tb2\ ](a),a\big)
				      + \frac {B_2 }{2!} r_{\rhd}^{(2)}\big(F[\ta1\ ](a),F[\ta1\ ](a),a\big) + \cdots
\end{align*}
Recall that here $\mathcal{T}_{pl}$ denotes the vector space of planar rooted trees. Hence, at order four 
we find:
\begin{align*}
	\Omega'(a)&= \cdots + \alpha(\te4\ ) F[\te4\ ](a) + \alpha(\tf41\ ) F[\tf41\ ](a) 
										  + \alpha(\tg42\ ) F[\tg42\ ](a)
									          +\alpha(\thj44\ ) F[\thj44\ ](a) + \cdots\\
			    &= \cdots + B^3_1  r_{\rhd}^{(2)}\big(F[\tc3\ ](a),a\big)
				           + \frac {B_2 }{2!}B_1 r_{\rhd}^{(2)}\big(F[\tb2\ ](a),F[\ta1\ ](a),a\big)
				           + \frac {B_2 }{2!}B_1 r_{\rhd}^{(2)}\big(F[\ta1\ ](a),F[\tb2\ ](a),a\big)\\ 
				 &  \hspace{1cm} \;\;\;           
				           + B_1\frac {B_2 }{2!} r_{\rhd}^{(2)}\big(F[\td31\ ](a),a\big)
+ \cdots.
\end{align*}
Recall that $B_3=0$. Due to the recursive nature of the pre-Lie Magnus expansion at order five the following set of $8$ trees appear:
$$
	\bigl\{ B_+(\te4),\ B_+(\tg42),\ B_+(\ta1,\ta1,\ta1,\ta1) ,\ B_+(\td31,\ta1),\ B_+(\ta1,\td31),\ B_+(\tb2,\tb2),\ B_+(\tc3,\ta1),\ B_+(\ta1,\tc3) \bigr\}.
$$ 
Of course, the first two terms have different coefficients. Other trees do not appear due to the fact that odd Bernoulli numbers are zero, i.e. $B_3=0$. Hence, for the order five contribution we find:
\begin{align*}
	\Omega'_5(a) &=  
	 - B_1 \frac 16\  \bigl( ((a \rhd a) \rhd a ) \rhd a \bigr) \rhd a  
	 - B_1 \frac{1}{12}\ \bigl( a \rhd ((a \rhd a ) \rhd a) \bigr) \rhd a \\
	& + \frac{ B_4}{4!}\ a \rhd \bigl(a \rhd (a \rhd (a \rhd a)) \bigr) 
	    + \frac{B^2_2}{2!2!}\ (a  \rhd (a \rhd a)) \rhd (a \rhd a)  + \\
	 & + \frac{B^2_2}{2!2!}\ a \rhd  \bigl((a  \rhd (a \rhd a)) \rhd a \bigr)  
	    + \frac{B_2}{2!} B_1^2\ (a \rhd a) \rhd ((a  \rhd a)  \rhd a )  \\
	& + \frac{B_2}{2!} B_1^2\ ((a\rhd a) \rhd a) \rhd (a \rhd a ) 
	   + \frac{B_2}{2!} B_1^2\ a \rhd \bigl( ((a\rhd a) \rhd a) \rhd a \bigr) .
\end{align*}
Using the left pre-Lie relation we may write: 
\begin{align*}
            &  \frac{B_2}{2!} B_1^2\ ((a\rhd a) \rhd a) \rhd (a \rhd a ) \\
	   = &\frac{B_2}{2!} B_1^2\ a \rhd \bigl( ((a\rhd a) \rhd a) \rhd a \bigr)
             - \frac{B_2}{2!} B_1^2\ \bigl( a \rhd ((a\rhd a) \rhd a) \bigr) \rhd a
            +\frac{B_2}{2!} B_1^2\ \bigl( ((a\rhd a) \rhd a) \rhd a \bigr) \rhd a   
\end{align*}
leading to seven terms:
\begin{align*}
	\Omega'_5(a) &=  
	 - B_1\frac{B_2}{2!} \frac 52\  \bigl( ((a \rhd a) \rhd a ) \rhd a \bigr) \rhd a  
	 - B_1 \frac{B_2}{2!} \frac 12 \ \bigl( a \rhd ((a \rhd a ) \rhd a) \bigr) \rhd a \\
	& + \frac{ B_4}{4!}\ a \rhd \bigl(a \rhd (a \rhd (a \rhd a)) \bigr) 
	    + \frac{B^2_2}{2!2!}\ (a  \rhd (a \rhd a)) \rhd (a \rhd a)  + \\
	 & + \frac{B^2_2}{2!2!}\ a \rhd  \bigl((a  \rhd (a \rhd a)) \rhd a \bigr)  
	    + \frac{B_2}{2!} B_1^2\ (a \rhd a) \rhd ((a  \rhd a)  \rhd a )  \\
	& + B_2 B_1^2\ a \rhd \bigl( ((a\rhd a) \rhd a) \rhd a \bigr) .
\end{align*}
It is an open question whether one can further reduce the order five term using the left pre-Lie identity. Moreover, it is it would be interesting to find a recursion for $\Omega'(\lambda a)$ which already incorporates the pre-Lie identity.

\vspace{0.5cm}
\goodbreak

\textbf{Acknowledgements}
We would like to thank A. Iserles, A. Lundervold, H. Munthe--Kaas and A. Munthe-Kaas-Zanna for helpful discussions and useful remarks.  

\smallskip

\end{document}